\newcommand*{\rom}[1]{\expandafter\@slowromancap\romannumeral #1@}
\@date \else {\vskip3ex \centering\footnotesize\@date\par\vskip1ex}\fi
\else \@footnotetext{\@setdate}\fi}
\newtheorem{thm}{Theorem}[section]
\newtheorem{cor}[thm]{Corollary}
\newtheorem{lem}[thm]{Lemma}
\newtheorem{prop}[thm]{Proposition}
\newtheorem{claim}[thm]{Claim}
\theoremstyle{definition}
\newtheorem{defn}[thm]{Definition}
\theoremstyle{remark}
\newtheorem{rmk}[thm]{Remark}
\numberwithin{equation}{section}
\begin{document}
\title{Removable singularities of the cscK metric}
\author{Yu Zeng}
\address{Department of Mathematics, Stony Brook University, 100 Nicolls Rd, Stony Brook, NY, 11794}
\email{yu.zeng@stonybrook.edu}
\maketitle

\begin{abstract}
In this paper, we prove a removable singularity theorem for cscK(constant scalar curvature K\"ahler) metrics, which generalizes the result of Chen-He\cite[Theorem 6.2]{Chen}. Let $f$ be a holomorphic function on $\mathbb{D}^n$, and denote $S = \{ f=0\}$. Suppose $\varphi \in  C_{loc}^{\infty}(\mathbb{D}^n \setminus S) \cap L^{\infty}(\mathbb{D}^n) \cap PSH(\mathbb{D}^n \setminus S)$ defines a cscK metric 
$$g_\varphi=\sqrt{-1} \varphi_{i \bar{j}}\mathrm{d}z^i\wedge\mathrm{d}\bar z^j$$ 
on $\mathbb{D}^n \setminus S$. 
If there exist a constant $C>0$ and a function $\theta(r) = o(1)$ as $r \rightarrow 0$ such that
\begin{equation*}
\frac{1}{C} \exp\{- \theta(|f|)\big|\log |f|\big|^{\frac{1}{2}}\} I \leq g_{\varphi} \leq C I
\end{equation*} 
where $I = \sqrt{-1} \delta_{i\bar j} \mathrm d z^i \wedge \mathrm{d} \bar z^j$, then $g_{\varphi}$ extends to  a smooth cscK metric on $\mathbb{D}^n$.
\end{abstract}

\section{Introduction}
An inspiring example about the removable singularities of cscK (constant scalar curvature K\"ahler) metric is the isolated singularity in complex dimension one. Suppose $\varphi \in C_{loc}^{\infty}(\mathbb{D}^*) \cap L^{\infty}(\mathbb{D}) \cap PSH(\mathbb{D}^*)$ and $\Delta \varphi > 0 $ on $\mathbb{D}^*$ where $\mathbb{D}$ is the unit disk in $\mathbb{C}$ and $\mathbb{D}^* = \mathbb{D} \setminus \{0\}$. Let $u = \log (\Delta \varphi)$. Then the K\"ahler metric $\sqrt{-1} \partial \bar \partial \varphi$ on $\mathbb{D}^*$ has constant scalar curvature equal to $K$ if and only if
\begin{align}
\Delta u = -K e^u.
\end{align}
If $u = o(1) \log r$, we can extend $\varphi$ to be a smooth K\"ahler potential on $\mathbb{D}$. This follows from a simple argument using the weak maximum principle of the Laplacian equation. If $ u \sim \log r $, then $\varphi$ may not extend smoothly, for example, the standard conical K\"ahler metric.

Removable singularity problems in higher dimensions can be very complicated, for instance, see the recent work of LeBrun \cite{CB} and Chen-Donaldson-Sun\cite{CDS}. The present paper is a generalization to the last section of Chen-He\cite{Chen} where they can remove isolated singularity of a cscK metric which is quasi-isometry to a smooth background K\"ahler metric.

In this paper we'll prove the following main theorem. 
\begin{thm}[Main Theorem, see Corollary $\ref{thm5}$]\label{thm1.1}
Let $f$ be a holomorphic function on $\mathbb{D}^n$, and denote $S = \{ f=0\}$. Suppose $\varphi \in  C_{loc}^{\infty}(\mathbb{D}^n \setminus S) \cap L^{\infty}(\mathbb{D}^n) \cap PSH(\mathbb{D}^n \setminus S)$ and it defines a cscK metric $g_\varphi = \sqrt{-1} \varphi_{i\bar{j}}\mathrm{d}z^i\wedge\mathrm{d}\bar{z}^j$ on $(\mathbb{D}^n \setminus S)$. 
If there exist a constant $C>0$ and a function $\theta(r) = o(1)$ as $r \rightarrow 0$ such that 
\begin{align}\label{eqn1.1}
\frac{1}{C} \exp\{-\theta(|f|)\big|\log |f|\big|^{\frac{1}{2}}\} I  \leq g_{\varphi} \leq C I 
\end{align} 
where $I = \sqrt{-1} \delta_{i\bar j} \mathrm d z^i \wedge \mathrm d \bar z^j $, then $g_{\varphi}$ extends to  a smooth cscK metric on $\mathbb{D}^n$.
\end{thm}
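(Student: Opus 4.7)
The plan is to decouple the fourth-order cscK equation into a pair of second-order equations and then bootstrap regularity across $S$. Setting $F = \log\det(\varphi_{i\bar j})$, on $\mathbb{D}^n \setminus S$ the cscK condition is equivalent to the coupled system
\begin{equation*}
\det(\varphi_{i\bar j}) = e^F, \qquad g_\varphi^{i\bar j} F_{i\bar j} = -K.
\end{equation*}
Hypothesis $(\ref{eqn1.1})$ gives $-n\theta(|f|)|\log|f||^{1/2} - C_0 \le F \le C_0$ together with $(1/C)I \le g_\varphi^{i\bar j} \le C\exp(\theta(|f|)|\log|f||^{1/2})I$, so the linear equation for $F$ is uniformly elliptic from below but degenerates from above at the precise rate $\exp(\theta|\log|f||^{1/2})$.

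First I would extend $\varphi$ to a bounded PSH function on $\mathbb{D}^n$. Since $S = \{f=0\}$ is pluripolar and $\varphi$ is bounded, the classical extension theorem for bounded plurisubharmonic functions across pluripolar sets applies; the upper bound $g_\varphi \le CI$ then extends as a distributional inequality to all of $\mathbb{D}^n$, so $\varphi \in C^{1,\alpha}_{loc}(\mathbb{D}^n) \cap W^{2,p}_{loc}(\mathbb{D}^n)$ for every $\alpha<1$ and $p<\infty$. This already settles regularity below $C^2$; the remaining challenge is to obtain $C^2$ estimates that do not degenerate across $S$.

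The central step is to upgrade the one-sided control on $F$ to a two-sided $L^\infty$ bound. Using the standard divergence-form rewriting
\begin{equation*}
\partial_i\big(\det(g_\varphi)\, g_\varphi^{i\bar j} \partial_{\bar j} F\big) = -K \det(g_\varphi),
\end{equation*}
one obtains a uniformly elliptic divergence-form equation with bounded measurable coefficients whose ellipticity ratio degenerates like $\exp(-n\theta|\log|f||^{1/2})$ near $S$. The specific rate $|\log|f||^{1/2}$ is tailored so that a Moser/De Giorgi iteration, combined with pluricapacity estimates for the tubular neighborhoods $\{|f|<\varepsilon\}$ and cutoff functions built from $\log|f|$, yields $F \in L^\infty_{loc}(\mathbb{D}^n)$. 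The assumption $\theta(r)=o(1)$ enters essentially here: it is exactly what makes the weighted energy on $\{|f|<\varepsilon\}$ summable in the iteration.

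Once $F \in L^\infty$, the Monge-Amp\`ere equation $\det(\varphi_{i\bar j}) = e^F$ has bounded right-hand side, so Ko\l odziej's $L^\infty$ estimate gives continuity of $\varphi$; thereafter the Evans--Krylov/Chen--He $C^{2,\alpha}$ theory and Schauder estimates, applied alternately to the Monge-Amp\`ere and to the linear equation for $F$, produce the standard bootstrap to $\varphi, F \in C^\infty(\mathbb{D}^n)$. The main obstacle is the $L^\infty$ control of $F$: Chen--He treat an isolated singularity with a uniform quasi-isometry bound, whereas here $S$ can be positive-dimensional and the quasi-isometry constant itself degenerates. Controlling the pluricapacity of sublevel sets of $|f|$ and designing cutoffs adapted to $|f|$ so that the Moser iteration absorbs the $\exp(\theta|\log|f||^{1/2})$ factor is the technical core of the argument.
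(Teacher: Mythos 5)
Your skeleton coincides with the paper's: the same splitting into the divergence-form linear equation for $F=\log\det(\varphi_{i\bar j})$ and the complex Monge--Amp\`ere equation, logarithmic cutoffs adapted to $|f|$, a Moser-type argument for $F\in L^\infty_{loc}$, and a bootstrap. But the decisive step after $F\in L^\infty$ is not correct as stated. Ko\l odziej's estimate only yields $L^\infty$ bounds/continuity of the potential, which you already have (indeed $\varphi\in C^{1,\alpha}\cap W^{2,p}_{loc}$), and Evans--Krylov does not apply: the complex Monge--Amp\`ere operator is not uniformly elliptic on $Sym(2n)$, and $\varphi$ is only $W^{2,p}$, not $C^{1,1}$. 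The passage from ``$\det(\varphi_{i\bar j})=e^{F}$ with $F\in C^{\alpha}$ and $\theta I\le(\varphi_{i\bar j})\le\theta^{-1}I$ a.e.'' to $\varphi\in C^{2,\beta}$ is precisely Y.~Wang's theorem \cite{YW}, and the whole of Section~\ref{sec2} of the paper is devoted to the missing bridge: constructing the uniformly elliptic concave extension $F_\theta$ and proving that the $W^{2,p}$ solution is a \emph{viscosity} solution of $F_\theta(D^2\varphi)=2u$ (via strong-maximum-principle arguments valid for $W^{2,n}$ functions), after which Caffarelli--Cabr\'e \cite{CA} gives $C^{2,\beta}$. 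Appealing to ``Chen--He $C^{2,\alpha}$ theory'' is also unavailable, since \cite{Chen} assumes an isolated singularity and quasi-isometry to a smooth background metric --- exactly the hypotheses this theorem is meant to relax. Without this bridge the bootstrap never starts.

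The $L^\infty$ bound on $F$ is likewise only announced, not proved. The paper's mechanism is: (i) show the weighted energy $\int|\nabla u|^2_\varphi\det g_\varphi\,dx$ is finite using the logarithmic cutoff --- this is exactly where $\theta=o(1)$ and the exponent $\tfrac12$ enter, since $u^2\lesssim \theta(|f|)^2|\log|f||$ must be integrated against $|\nabla\eta_\epsilon|^2$; (ii) verify that $u$ actually belongs to Trudinger's Hilbert space $H(\mathcal A,\cdot)$ --- finiteness of the energy is \emph{not} membership in the completion of smooth functions, and the approximation of $u$ by $(1-\eta_\epsilon)u$ has to be checked; (iii) invoke the local boundedness theorems of \cite{Trudinger2}, which require $\lambda_{\mathcal A}^{-1}\in L^t_{loc}$ for some $t>n$, an integrability that hypothesis (\ref{eqn1.1}) supplies but which near the singular locus $\{\nabla f=0\}$ of the divisor needs a separate argument (the paper splits off a set of real Minkowski codimension $>2$, Theorem~\ref{thm4}, and uses Weierstrass preparation in Corollary~\ref{thm5}). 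Your proposal names this step ``the technical core'' but provides neither the weighted-space framework nor the integrability verification, and it does not explain how the iteration is run where $S$ fails to be a smooth hypersurface; as it stands both load-bearing steps of the argument are placeholders.
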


\begin{rmk}\label{rmk1.2}
The assumptions, $\varphi \in L^{\infty}(\mathbb{D}^n)$ and $ 0 \leq \Delta \varphi \leq C$ holding in $C^{\infty}$ sense on $\mathbb{D}^n \setminus S$, imply that $\varphi \in W^{2,p}(\mathbb{D}^n)$ for any $1 < p < \infty$. Thus $\varphi \in C^{1,\alpha}(\mathbb{D}^n)$. This fact will be proved in Theorem $\ref{thm7}$
\end{rmk}

\begin{rmk}
Condition $(\ref{eqn1.1})$ rules out the possibilty that $g_{\varphi}$ has conical singularities.
\end{rmk}

To prove the main theorem, we view the cscK equation as two equations
\begin{align}
&\Delta_{\varphi} u := g_{\varphi}^{i\bar j} \frac{\partial^2 u}{ \partial z_i \partial \bar z_j}  = - K, \label{eqn1.2}\\
&u = \log \det(g_{\varphi}), \label{eqn1.3}
\end{align}
where K is the constant scalar curvature. In order to avoid unnecessary difficulty and complication, we'll first work on a simpler case when our singular locus is a smooth divisor in Secion $\ref{sec1}$ and Secion $\ref{sec2}$. In Section $\ref{sec1}$, we'll use techniques in \cite{Trudinger2} to deal with $(\ref{eqn1.2})$ which is a linear elliptic equation with measurable coefficients and improve the regularity of $u$ to $C^{\alpha}$. In Section $\ref{sec2}$, we deal with $(\ref{eqn1.3})$, which is the well-known complex Monge-Amp\`ere equation. It's a fully nonlinear elliptic equation. Recently, Y. Wang\cite{YW} has developed the $C^{2,\alpha}$-estimate for the complex Monge-Amp\`ere equation assuming potential lying in $C^2$. The main idea in his work is to view the solution of complex Monge-Amp\`ere equation as a solution of a real fully nonlinear elliptic equation. He also suggests that the argument also works for the viscosity solution. In Section $\ref{sec2}$, we'll first show that $\varphi \in W^{2,p}_{loc}(\mathbb{D}^n)$ for $p>n$ and then show that a $W^{2,p}$ solution of complex Monge-Amp\`ere is also the viscosity solution for the real nonlinear elliptic equation as described in Wang's paper. Thus, we get $C^{2,\alpha}$ regularity of $\varphi$.  In Section $\ref{sec3}$, we prove the removable singularity theorem of cscK metric for higher codimension singular set with real codimension $d >2$. And also, as a corollary, we give the proof of the Theorem $\ref{thm1.1}$.

\section {The linear problem} \label{sec1}

Let $u = \log \det(g_{\varphi}) \in C_{loc}^{\infty}(\mathbb{D}^n \setminus \{z_1=0\})$. It satisfies the linear elliptic equation 
\begin{align}\label{eqn11}
-\Delta_{\varphi} u = K
\end{align}
in the $C^{\infty}$ sense on $\mathbb{D}^n \setminus \{z_1=0\}$, since $g_{\varphi}$ is a smooth cscK metric on $\mathbb{D}^n \setminus \{z_1=0\}$. However, notice that $(\ref{eqn11})$ is not a strongly and uniformly elliptic equation as in the classical elliptic theory. Therefore, we use methods in \cite{Trudinger2} which treat the ellipitic equations with only measurable coefficients. 

We introduce a few definitions from \cite{Trudinger2}. Let $\mathcal{A} = [a^{i\bar{j}}] $ be a positive definite, measurable, $n\times n$ Hermitian matrix valued function on $\mathbb{D}^n$ and $\mu$ a nonnegative measurable function on $\mathbb{D}^n$. Denote $\Lambda_{\mathcal{A}}$ and $\lambda_{\mathcal{A}}$ the largest and smallest eigenvalues of $\mathcal{A}$. Under the assumptions that $\Lambda_{\mathcal{A}}$ and $\mu$ belong to $L^1(\mathbb{D}^n)$ and $\mu$ is positive on a subset of $\mathbb{D}^n$ of positive measure,  the formula
\begin{align}
\langle \varphi, \psi \rangle_{\mathcal{A}} = \int_{\mathbb{D}^n} (a^{i\bar{j}}\frac{\partial \varphi}{\partial z_i} \frac{\partial \psi}{\partial \bar{z_{j}}} + \mu \varphi \psi)dx  \label{eqn12}
\end{align}
defines a real inner product on $C^{\infty}(\mathbb{D}^n)$. 

\begin{defn}
The Hilbert space $H(\mathcal{A}, \mu, \mathbb{D}^n)$, is defined as the completion of $C^{\infty} (\mathbb{D}^n)$ under the inner product $(\ref{eqn12})$.
\end{defn}

\begin{rmk}
If $\Lambda_{\mathcal{A}}$ and $\mu$ are not integrable, then $\langle , \rangle_\mathcal{A}$ is not well defined. Fortunately, in our case $\mu = \Lambda_{\mathcal{A}} \in L^{\infty}(\mathbb{D}^n)$. 
\end{rmk}

\begin{rmk}
If $\lambda_{\mathcal A}^{-1} \in L^1(\mathbb{D}^n)$, then $\mathcal H(\mathcal A, \mu, \mathbb{D}^n)$ is a subspace of $W^{1,1}(\mathbb{D}^n)$.
\end{rmk}
For simplicity, write $H(\mathcal{A},\mathbb{D}^n) = H(\mathcal{A},\Lambda_{\mathcal{A}}, \mathbb{D}^n)$.

\begin{defn}
$u$ is called a $H(\mathcal{A} , \mathbb{D}^n)$ solution of equation $-\frac{\partial }{\partial z_i} (a^{i\bar{j}} \frac{\partial u}{\partial \bar{z_{j}}}) = f$ with $f\in L^1(\mathbb{D}^n)$, if $u \in H(\mathcal{A}, \mathbb{D}^n)$ and for any $\varphi \in C_{0}^{\infty}(\mathbb{D}^n)$, we have
\begin{align}
\int_{\mathbb{D}^n} a^{i\bar{j}}\frac{\partial u}{\partial z_i} \frac{\partial \varphi}{\partial \bar{z_j}}dx = \int_{\mathbb{D}^n} f\varphi dx.
\end{align}
\end{defn}

For the rest of this section, we denote $\mathcal{A} =[a^{i\bar{j}}]= [g_{\varphi}^{i\bar{j}} \det(g_{\varphi})]$ and denote $r\mathbb{D}^n = \{|z_i | < r\}.$ Using notions above, we can conclude the following.
\begin{prop}\label{thm1}
Let $u$ satisfy $(\ref{eqn11})$ in the $C^{\infty}$ sense on $\mathbb{D}^n \setminus \{z_1 =0\}$. If there exist a constant $C>0$ and a function $\theta(r) = o(1)$ as $r \rightarrow 0$, s.t. 
\begin{align}\label{eqn13}
\frac{1}{C} \exp\{- \theta(|z_1|) \big| \log |z_1| \big|^{\frac{1}{2}}\} I \leq g_{\varphi} \leq C I ,
\end{align} 
then u  is a $H(\mathcal{A}, 0.5\mathbb{D}^n)$ solution of the equation  
\begin{align}\label{eqn15}
-\frac{\partial }{\partial z_i} (a^{i\bar{j}} \frac{\partial u}{\partial \bar{z_{j}}}) = K \det 
g_{\varphi}.
\end{align}
\end{prop}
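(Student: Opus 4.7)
The strategy is to recast equation \eqref{eqn11} in divergence form via the classical identity $\partial_i a^{i\bar j}=0$ (a Jacobi-type identity for the cofactor matrix of a K\"ahler potential's complex Hessian), which gives
\[ -\tfrac{\partial}{\partial z_i}\bigl(a^{i\bar j}\tfrac{\partial u}{\partial \bar z_j}\bigr) = K\det g_\varphi \]
classically on $\mathbb{D}^n\setminus\{z_1=0\}$. What remains is to justify passing to test functions supported across $\{z_1=0\}$. Two pointwise facts from \eqref{eqn13} set the stage: the eigenvalues of $g_\varphi$ lie in $[C^{-1}\exp\{-\theta(|z_1|)\bigl|\log|z_1|\bigr|^{1/2}\},\,C]$, so the eigenvalues $\prod_{k\neq i}\lambda_k$ of $\mathcal A$ satisfy $\Lambda_\mathcal A\leq C^{n-1}$ (hence $\mathcal A$ and $\mu=\Lambda_\mathcal A$ are globally $L^\infty$); meanwhile $u=\log\det g_\varphi$ satisfies $u\leq n\log C$ and $u\geq -n\log C - n\theta(|z_1|)\bigl|\log|z_1|\bigr|^{1/2}$, and therefore
\[ u^2 \leq C_1 + C_2\,\theta(|z_1|)^2\,\bigl|\log|z_1|\bigr|. \]

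The key device is a logarithmic cutoff $\eta_\epsilon=\eta_\epsilon(|z_1|)$ that vanishes on $\{|z_1|\leq\epsilon\}$, equals $1$ on $\{|z_1|\geq\sqrt\epsilon\}$, and is linear in $\log|z_1|$ in between, paired with a smooth spatial cutoff $\chi\in C_0^\infty(0.6\mathbb{D}^n)$ with $\chi\equiv 1$ on $0.5\mathbb{D}^n$. A direct computation gives
\[ \int_{\mathbb{D}^n}|\nabla\eta_\epsilon|^2\,dx = O\bigl(|\log\epsilon|^{-1}\bigr), \qquad \int_{\mathbb{D}^n}\bigl|\log|z_1|\bigr|\,|\nabla\eta_\epsilon|^2\,dx = O(1). \]
Since $\eta_\epsilon\chi$ is compactly supported away from $\{z_1=0\}$, I may test the classical divergence equation against $\eta_\epsilon^2\chi^2 u$, integrate by parts without boundary contribution, and absorb the cross term through Cauchy–Schwarz in the $\mathcal A$-inner product to get
\[ \int \eta_\epsilon^2\chi^2\, a^{i\bar j}\partial_i u\,\partial_{\bar j} u\,dx \;\leq\; C\!\int u^2\bigl(|\nabla\eta_\epsilon|^2\chi^2 + \eta_\epsilon^2|\nabla\chi|^2\bigr) dx + C\!\int_{\mathbb{D}^n}|u|\,dx. \]
Substituting the pointwise bound on $u^2$, the only delicate contribution is $\int\theta^2\bigl|\log|z_1|\bigr|\,|\nabla\eta_\epsilon|^2\,dx$, and since $\theta$ is $o(1)$ on $\{|z_1|\leq\sqrt\epsilon\}$ (the support of $\nabla\eta_\epsilon$), this is bounded by $\bigl(\sup_{r\leq\sqrt\epsilon}\theta(r)^2\bigr)\cdot O(1)=o(1)$. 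Fatou's lemma then yields $\int_{0.5\mathbb{D}^n}a^{i\bar j}\partial_i u\,\partial_{\bar j} u\,dx<\infty$.

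With finite energy in hand, the truncation $u_\epsilon:=\eta_\epsilon\chi_0 u$ (where $\chi_0\in C_0^\infty(\mathbb{D}^n)$ equals $1$ on $0.5\mathbb{D}^n$) is smooth on $\mathbb{D}^n$, and rerunning the estimate on $u_\epsilon-u$ shows $u_\epsilon\to u$ in the $H$-norm, placing $u\in H(\mathcal A,0.5\mathbb{D}^n)$. To verify \eqref{eqn15} weakly, I test the classical equation against $\eta_\epsilon\psi$ for an arbitrary $\psi\in C_0^\infty(0.5\mathbb{D}^n)$; the only nontrivial cross term $\int\psi\,a^{i\bar j}\partial_i\eta_\epsilon\,\partial_{\bar j}u\,dx$ is controlled by Cauchy–Schwarz, using the now-finite Dirichlet energy of $u$ together with $\int|\nabla\eta_\epsilon|^2\to 0$, so it disappears in the limit and leaves \eqref{eqn15}. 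The main obstacle is the energy estimate itself: a standard Lipschitz cutoff on $\{\epsilon<|z_1|<2\epsilon\}$ gives $\int u^2|\nabla\eta_\epsilon|^2\sim\theta(\epsilon)^2|\log\epsilon|$, which diverges for a generic $o(1)$ function $\theta$; the logarithmic cutoff is the precise device that absorbs the $|\log|z_1||$ weight inside $u^2$ down to an $O(1)$ integral, leaving only the small factor $\theta^2\to 0$ to close the argument — exactly where the $\bigl|\log|f|\bigr|^{1/2}$ exponent in hypothesis \eqref{eqn13} is borderline-sharp.
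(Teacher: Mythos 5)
Your proposal is correct and follows essentially the same route as the paper: the logarithmic cutoff adapted to $\{z_1=0\}$, a Caccioppoli-type energy estimate in which the squared bound $u^2\leq C+\theta^2\bigl|\log|z_1|\bigr|$ is absorbed by the cutoff's $O(1)$ weighted-gradient integral, Fatou's lemma, and then approximation of $u$ in the $H(\mathcal A,0.5\mathbb{D}^n)$-norm by the truncations. You additionally make explicit two points the paper leaves implicit (the divergence-free identity $\partial_i a^{i\bar j}=0$ and the verification of the weak equation against test functions crossing the divisor), which is fine and consistent with the paper's argument.
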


Before we prove Proposition $\ref{thm1}$, we need to introduce the logarithmic cut-off function which also appears in \cite{CDS}. For $\epsilon >0$, let 
\begin{equation*}
\eta_{\epsilon}(z)=\begin{cases}
1 &    |z|< \epsilon^2, \\
\frac{1}{\log \epsilon}(\log |z| -\log\epsilon) &   \epsilon^2 \leq |z| \leq  \epsilon,\\
0  &     |z| > \epsilon.
\end{cases}
\end{equation*}
Immediately we know that $\eta_{\epsilon}\in W_{0}^{1,2}(\mathbb{D})$ and
\begin{equation*}
\int_{\mathbb{D}} |\nabla \eta_{\epsilon}|^2 dx = \frac{2\pi}{\log^2 \epsilon} \int_{\epsilon^2}^{\epsilon} \frac{1}{r^2} r dr = -\frac{2\pi}{\log \epsilon}.
\end{equation*}

Now we are ready to prove Proposition $\ref{thm1}$.
\begin{proof}
First, we claim that 
\begin{equation}\label{eqn14}
\int_{0.5\mathbb{D}^n} a^{i\bar{j}}\frac{\partial u}{\partial z_i} \frac{\partial u}{\partial \bar{z_j}}dx = \int_{0.5\mathbb{D}^n}|\nabla_{\varphi} u|_{\varphi}^2 \det( g_{\varphi} )dx < +\infty,
\end{equation}
where $0.5\mathbb{D}^n = \{|z_i|<0.5\}$.
Let $\sigma_{\epsilon} = (1- \eta_{\epsilon}) \psi \in W_{0}^{1,2}(\mathbb{D}^n)$, which is supported away from the divisor $\{z_1= 0\} \subset \mathbb{C}^n$, where $\psi$ is a smooth cut-off function with $\psi =1$ in $0.5\mathbb{D}^n$ and supported in $0.75\mathbb{D}^n$. We have
\begin{equation*}
|\nabla (\sigma_{\epsilon} u)|_{\varphi}^2 = \langle \nabla (\sigma_{\epsilon}^2  u ), \nabla u \rangle_{\varphi} + u^2|\nabla \sigma_{\epsilon}|_{\varphi}^2.
\end{equation*}
Therefore,
\begin{align*}
\int_{\mathbb{D}^n}|\nabla ( \sigma_{\epsilon} u) |_{\varphi}^2 \det(g_{\varphi}) dx 
=\underbrace{\int_{\mathbb{D}^n}\langle \nabla (\sigma_{\epsilon}^2 u), \nabla u\rangle_{\varphi} \det(g_{\varphi})dx}_{I} 
+ \underbrace{\int_{\mathbb{D}^n}u^2 |\nabla \sigma_{\epsilon}|_{\varphi}^2\det(g_{\varphi})dx }_{II} .
\end{align*}
By integration by parts, we get
\begin{align*}
I=   \int_{\mathbb{D}^n} K  \sigma_{\epsilon}^2 u  e^u dx \leq   C.
\end{align*}
According to $(\ref{eqn13})$, $u^2 \leq \theta(|z_1|)\log |z_1| +C $. And it implies that $u \in L^2_{loc}(\mathbb{D}^n)$. Thus,
\begin{align*}
II   \leq  \int_{\mathbb{D}^n} u^2  |\nabla \sigma_{\epsilon}|^2 \Lambda_{\mathcal{A}}dx
    &\leq C \{\int_{0.75 \mathbb{D}^n} u^2 |\nabla \eta_{\epsilon}|^2 dx + \int_{0.75\mathbb{D}^n} u^2 |\nabla \psi|^2 dx\} \\
&\leq C\{ \int_{0.75\mathbb{D}^{n-1}} (\frac{1}{\log^2 \epsilon}\int_{0}^{2\pi}\int_{\epsilon^2}^{\epsilon}   \frac{u^2}{r} drd\theta)  dy + \int_{0.75\mathbb{D}^n} u^2 dx\}\\
& \leq  C\{ \frac{1}{\log^2 \epsilon}\int_{2 \log \epsilon}^{\log \epsilon}   \big(\theta(r)\log r +C\big) d(\log r)   + 1\}\\
& \leq C(o(1) + 1) \leq C.
\end{align*}

So as $\epsilon \rightarrow 0$, by Fatou's lemma we have
\begin{align*}
\int_{0.5\mathbb{D}^n}|\nabla_{\varphi} u|_{\varphi}^2 \det( g_{\varphi} )dx &\leq \int_{\mathbb{D}^n}|\nabla_{\varphi} (\psi u)|_{\varphi}^2 \det( g_{\varphi} )dx \\
&\leq \liminf_{\epsilon \rightarrow 0} \int_{\mathbb{D}^n}|\nabla ( \sigma_{\epsilon} u) |_{\varphi}^2 \det(g_{\varphi}) dx  \leq C.
\end{align*}
So the claim is proved. However, one thing to notice is that $(\ref{eqn14})$ doesn't mean that $u \in H(\mathcal{A}, 0.5\mathbb{D}^n)$. This is because that $[a^{i\bar{j}}]$ is not strongly and uniformly elliptic, the completion of smooth functions under the norm $(\ref{eqn12})$ is in general a strictly smaller subset of functions with integrable derivatives for which $(\ref{eqn12})$ is finite. Therefore, we need to verify that $u$ can be approximated under the norm $(\ref{eqn12})$ by smooth functions. Fortunately, $(1- \eta_{\epsilon})u$'s for $\epsilon \rightarrow 0$ are what we need. To justify this, we can go to the the estimation of $II$ and use the same trick to prove that
 $$ \|u - (1- \eta_{\epsilon}) u \|_{\mathcal{H}(\mathcal{A}, 0.5 \mathbb{D}^n)}= \int_{0.5\mathbb{D}^n} |\nabla (\eta_{\epsilon} u )|_{\varphi}^2 \det(g_{\varphi}) dx \rightarrow 0,$$ as $\epsilon \rightarrow 0 $.
\end{proof}

Applying techniques in \cite{Trudinger2}, we have an immediate corollary, 
\begin{cor}\label{thm10} 
Let u be a $H(\mathcal{A}, 0.5\mathbb{D}^n)$ solution of $(\ref{eqn15})$. Then $u \in L_{loc}^{\infty} (0.5 \mathbb{D}^n)$.
\end{cor}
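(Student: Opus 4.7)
The plan is to invoke the local boundedness theorem of Trudinger \cite{Trudinger2} for divergence-form linear elliptic equations with only measurable coefficients. The classical uniform ellipticity assumption is there replaced by integrability hypotheses on the extremal eigenvalues $\Lambda_{\mathcal{A}}$ and $\lambda_{\mathcal{A}}^{-1}$; once these, together with boundedness of the source $f = K\det g_\varphi$, are verified, the conclusion $u\in L^\infty_{loc}(0.5\mathbb D^n)$ is immediate.

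First I would analyze the spectrum of $\mathcal A = \det(g_\varphi)\,g_\varphi^{-1}$. If $\mu_1 \le \cdots \le \mu_n$ denote the eigenvalues of $g_\varphi$, the eigenvalues of $\mathcal A$ are precisely the $(n-1)$-fold products $\prod_{k\ne i}\mu_k$. Hypothesis $(\ref{eqn13})$ gives $C^{-1}\exp\bigl\{-\theta(|z_1|)\bigl|\log|z_1|\bigr|^{1/2}\bigr\} \le \mu_i \le C$, hence
$$\Lambda_{\mathcal A} \le C^{n-1}, \qquad \lambda_{\mathcal A}^{-1} \le C^{n-1}\exp\bigl\{(n-1)\theta(|z_1|)\bigl|\log|z_1|\bigr|^{1/2}\bigr\}.$$
Because $\theta(r)=o(1)$, the upper bound on $\lambda_{\mathcal A}^{-1}$ grows more slowly than $|z_1|^{-\varepsilon}$ for every $\varepsilon > 0$, so $\lambda_{\mathcal A}^{-1}\in L^p(0.5\mathbb D^n)$ for all $p<\infty$; and the source is bounded since $|f|\le |K|C^n$.

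With these integrability properties in hand, the standard next step is a weighted Sobolev inequality on $H_0(\mathcal A, 0.5\mathbb D^n)$ of the form $\bigl(\int v^{2\chi}\,dx\bigr)^{1/\chi} \le C_0 \int a^{i\bar j} v_i v_{\bar j}\, dx$ for some $\chi > 1$. To obtain it, fix $1 < q < 2$, estimate
$$\int |\nabla v|^q\, dx \;\le\; \Bigl(\int a^{i\bar j}v_i v_{\bar j}\, dx\Bigr)^{q/2}\Bigl(\int \lambda_{\mathcal A}^{-q/(2-q)}\, dx\Bigr)^{(2-q)/2}$$
by H\"older, and then apply the classical Sobolev embedding $W^{1,q}_0(\mathbb R^{2n}) \hookrightarrow L^{2nq/(2n-q)}$; the weighted integral is finite by the previous step and $\chi = nq/(2n-q) > 1$ for $q$ close enough to $2$. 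The Moser iteration of \cite{Trudinger2} now applies: test $(\ref{eqn15})$ against $\eta^2|u|^{2\beta}u$ for $\beta \ge 0$ and a standard cut-off $\eta$, use the weighted Sobolev to boost integrability by the factor $\chi$ at each stage, and iterate to bound $\|u\|_{L^\infty(K)}$ in terms of $\|u\|_{L^2(0.5\mathbb D^n)}$ and $\|f\|_{L^\infty}$ on each compact $K \subset 0.5\mathbb D^n$; the $L^2$ norm is finite by Proposition $\ref{thm1}$. The main obstacle is precisely the weighted Sobolev step, but given the strong integrability $\lambda_{\mathcal A}^{-1} \in \bigcap_{p<\infty} L^p$ it follows routinely and the iteration closes.
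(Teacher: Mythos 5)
Your proposal is correct and follows essentially the same route as the paper: verify from $(\ref{eqn13})$ that $\Lambda_{\mathcal A}$ and the source $K\det g_\varphi$ are bounded and that $\lambda_{\mathcal A}^{-1}\in L^t_{loc}$ for every $t<\infty$, then invoke Trudinger's local boundedness result (Theorem 5.1 and Corollary 5.4 of \cite{Trudinger2}). The only difference is that you unpack the weighted Sobolev inequality and Moser iteration inside Trudinger's theorem, which the paper simply cites; your eigenvalue computation for $\mathcal A=\det(g_\varphi)g_\varphi^{-1}$ and the resulting integrability bounds are accurate.
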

\begin{proof}
We just need to check that $\lambda_{\mathcal{A}}^{-1} \in L_{loc}^{t}(\mathbb{D}^n)$ for some $t > n$. By assumption $(\ref{eqn13})$,
\begin{align*}
\lambda_{\mathcal{A}}^{-1} \leq C [\det(g_{\varphi})]^{-1} \leq C(\frac{1}{|z_1|})^{\theta(|z_1|)}.
\end{align*} 
Thus, $\lambda_{\mathcal{A}}^{-1} \in L^{t}(\mathbb{D}^n)$ for all $t < \infty$. Therefore, by Theorem 5.1 and Corollary 5.4 in \cite{Trudinger2}, we conclude that $u \in L_{loc}^{\infty}(\mathbb{D}^n)$.
\end{proof}

Since $g_{\varphi} \leq C I$ and by Corollary $\ref{thm10}$ $\det(g_{\varphi}) = e^u \geq\epsilon $, we conclude that $C^{-1} I \leq g_{\varphi} \leq C I$. Therefore, we can apply the classical elliptic theory \cite[Theorem 8.24] {Trudinger1} to conclude that $u \in C^{\alpha}(\mathbb{D}^n)$ for some $0< \alpha <1$.

\section{The nonlinear problem}\label{sec2}
Let's summarize what we get so far. In last section, assuming condition $(\ref{eqn13})$, we have derived the following consequences:
\begin{enumerate}
\item $u = \log \det(g_{\varphi}) \in C^{\alpha}(\mathbb{D}^n)$ for some $0< \alpha < 1$.
\item $\varphi \in L^{\infty}(\mathbb{D}^n) \cap PSH(\mathbb{D}^n \setminus \{z_1 = 0\}) \cap C_{loc}^{\infty}(\mathbb{D}^n \setminus \{z_1 = 0\})$. 
\item $C^{-1}\leq \Delta \varphi \leq C$ holds in $C^{\infty}$ sense on $\mathbb{D}^n \setminus \{z_1=0\}$.
\end{enumerate}
In \cite{YW}, Yu Wang derived the $C^{2,\alpha}$ regularity of the complex Monge-Amp\`ere equation with $ u \in C^{\alpha}$ and $\Delta \varphi \in L^{\infty}$.  In his paper, he interpreted the solution of complex Monge-Amp\`ere equation to the solution of a real fully nonlinear elliptic equation with an additional assumption $\varphi \in C^2$ and he also pointed out the same argument should work for the viscosity solution as well. Following his obersevation, we'll first try to get the best regularities of $\varphi$ using (2) and (3), and then show that $\varphi$ is a viscosity solution of the real nonlinear elliptic equation as in his paper. 

\begin{thm}\label{thm7}
Suppose $\varphi \in C_{loc}^{\infty}(\mathbb{D}^n \setminus \{z_1 = 0\}) \cap PSH (\mathbb{D}^n \setminus \{z_1=0\})$ satisfies the followings
\begin{enumerate}
\item $\varphi \in {L^{\infty}(\mathbb{D}^n)} $.
\item $\Delta \varphi =f \geq 0$ holds in $C^{\infty}$ sense on $(\mathbb{D}^n \setminus \{z_1=0\})$ with $f \in L_{loc}^{\infty}(\mathbb{D}^n) \cap C^{\infty}(\mathbb{D}^n \setminus \{z_1=0\})$.
\end{enumerate}
Then $\varphi \in W_{loc}^{2,p}(\mathbb{D}^n) \cap PSH(\mathbb{D}^n)$, for any $1 < p < \infty$.
\end{thm}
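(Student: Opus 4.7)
The plan is to first extend $\varphi$ across the singular divisor as a PSH function, then show its distributional Laplacian genuinely equals $f$ (no singular measure contribution on $\{z_1=0\}$), and finally invoke Calder\'on--Zygmund.

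\textbf{Step 1 (PSH extension and $W^{1,2}_{loc}$ regularity).} Since $\varphi \in L^{\infty}(\mathbb{D}^n)$ and is PSH on $\mathbb{D}^n \setminus \{z_1=0\}$, the classical Riemann-type extension theorem for bounded plurisubharmonic functions across analytic hypersurfaces (e.g.\ Grauert--Remmert) lets us extend $\varphi$ to a PSH function on $\mathbb{D}^n$, still denoted $\varphi$, via $\varphi(z) = \limsup_{w \to z, w \notin S} \varphi(w)$. In particular the distributional Laplacian $\mu := \Delta \varphi$ is a nonnegative Radon measure with locally finite mass. A standard energy computation, obtained by mollifying and using $\Delta(\varphi_\delta^2) = 2|\nabla \varphi_\delta|^2 + 2\varphi_\delta \Delta \varphi_\delta$ tested against a nonnegative cutoff, gives $\varphi \in W^{1,2}_{loc}(\mathbb{D}^n)$.

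\textbf{Step 2 (identifying $\Delta \varphi$ with $f$).} This is the heart of the proof. Extend $f$ by zero across $\{z_1=0\}$; one must show that for every $\psi \in C_c^\infty(\mathbb{D}^n)$,
\begin{equation*}
\int_{\mathbb{D}^n} \nabla \varphi \cdot \nabla \psi \, dx = \int_{\mathbb{D}^n} f \psi \, dx.
\end{equation*}
Use the logarithmic cutoff $\eta_\epsilon$ from Section~\ref{sec1}: the function $(1-\eta_\epsilon)\psi$ is supported in $\{|z_1| \geq \epsilon^2\}$ where $\varphi$ is smooth and $\Delta \varphi = f$ holds pointwise, so integration by parts gives
\begin{equation*}
\int \nabla \varphi \cdot \nabla [(1-\eta_\epsilon)\psi]\, dx = \int f (1-\eta_\epsilon)\psi\, dx.
\end{equation*}
Expand the left-hand side as $\int (1-\eta_\epsilon) \nabla \varphi \cdot \nabla \psi \, dx - \int \psi \nabla \varphi \cdot \nabla \eta_\epsilon \, dx$. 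Dominated convergence handles the first piece and the right-hand side. For the error term, Cauchy--Schwarz gives
\begin{equation*}
\Bigl|\int \psi \nabla \varphi \cdot \nabla \eta_\epsilon \, dx \Bigr| \leq \|\psi\|_\infty \|\nabla \varphi\|_{L^2(\mathrm{supp}\,\nabla \eta_\epsilon)} \|\nabla \eta_\epsilon\|_{L^2},
\end{equation*}
which vanishes as $\epsilon \to 0$ because $\|\nabla \eta_\epsilon\|_{L^2}^2 = O(1/|\log \epsilon|)$ and $\mathrm{supp}\, \nabla \eta_\epsilon$ shrinks to the measure-zero set $\{z_1=0\}$, on which $|\nabla \varphi|^2 \in L^1_{loc}$ has vanishing integral. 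This establishes $\Delta \varphi = f$ in the sense of distributions on all of $\mathbb{D}^n$.

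\textbf{Step 3 (elliptic regularity).} With $\Delta \varphi = f \in L^\infty_{loc}(\mathbb{D}^n)$ globally, Calder\'on--Zygmund theory immediately upgrades $\varphi$ to $W^{2,p}_{loc}(\mathbb{D}^n)$ for every $1 < p < \infty$, and the PSH extension from Step~1 gives $\varphi \in PSH(\mathbb{D}^n)$. The main obstacle is Step~2: ruling out a singular contribution of $\Delta \varphi$ on $\{z_1 = 0\}$. The specific shape of the logarithmic cutoff is essential here; a cruder cutoff with $\|\nabla \chi_\epsilon\|_\infty \sim 1/\epsilon$ would produce an error term of order one, whereas the log cutoff exploits the $2$-capacity-zero nature of a complex hypersurface (in the real gradient norm) to make the error infinitesimal.
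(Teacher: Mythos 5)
Your argument is correct, but it takes a genuinely different route from the paper. You front-load the pluripotential theory: the classical Grauert--Remmert removable-singularity theorem for bounded PSH functions across an analytic hypersurface gives the extension $\varphi\in PSH(\mathbb{D}^n)$ at the outset, hence $\Delta\varphi$ is a positive measure and $\varphi\in W^{1,2}_{loc}$, and the only remaining task is to rule out a singular part of $\Delta\varphi$ along $\{z_1=0\}$, which you do with the logarithmic cutoff; Calder\'on--Zygmund then finishes. The paper avoids quoting the PSH extension theorem: it subtracts the Newton potential $Nf$, uses the same logarithmic cutoff to show the bounded remainder $v=\varphi-Nf$ lies in $W^{1,2}_{loc}$ and is weakly harmonic across the divisor, concludes $v$ is smooth by Weyl's lemma so that $\varphi=v+Nf\in W^{2,p}_{loc}\hookrightarrow C^{1,\alpha}$, and only then recovers plurisubharmonicity on all of $\mathbb{D}^n$ from the distributional nonnegativity of $x^i\bar x^j\varphi_{i\bar j}$ via the criterion cited from Gunning. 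Both proofs hinge on the same capacity-type estimate $\|\nabla\eta_\epsilon\|_{L^2}^2=O(1/|\log\epsilon|)$; yours buys immediate access to $W^{1,2}_{loc}$ and to $PSH(\mathbb{D}^n)$ at the price of invoking a nontrivial classical extension theorem, while the paper's is more self-contained on the pluripotential side but needs the extra Gunning step at the end. (Minor point: your weak formulation should carry a minus sign, $\int\nabla\varphi\cdot\nabla\psi\,dx=-\int f\psi\,dx$; the slip is consistent throughout and harmless.)
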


\begin{proof}
Let $\Gamma(x) = \frac{1}{4n(1-n)\omega_{2n}} |x|^{2-2n}$ for $n > 1$. We introduce Newton Potential from \cite{Trudinger1}. Given any $1 < p <\infty$ and $g \in L^p(\mathbb{D}^n)$, define
\begin{equation*}
Ng(x) = \int_{\mathbb{D}^n} \Gamma(x-y)g(y) dy.
\end{equation*}
$Ng(x)$ is called the Newton Potential of $g$. From \cite[Theorem 9.9]{Trudinger1}, $Ng \in W_{loc}^{2,p}(\mathbb{D}^n)$ and $\Delta Ng = g$ a.e. on $\mathbb{D}^n$. In our case, $Nf(x) \in W_{loc}^{2,p}(\mathbb{D}^n)$ for any $1<p<\infty$ and $\Delta Nf = f$ holds a.e. on $\mathbb{D}^n$. Since $f\in C_{loc}^{\infty}(\mathbb{D}^n \setminus \{z_1 =0\})$, then $Nf \in C_{loc}^{\infty}(\mathbb{D}^n \setminus \{z_1 =0\})$.

Consider $v = \varphi - Nf \in C_{loc}^{\infty}(\mathbb{D}^n \setminus \{z_1=0\}) \cap L_{loc}^{\infty}(\mathbb{D}^n)$. We claim that $\Delta v = 0$ holds in the $W^{1,2}$ weak sense i.e. $v \in W_{loc}^{1,2}(\mathbb{D}^n)$ and for any $\psi \in C_{0}^{\infty}(\mathbb{D}^n)$, 
\begin{align}\label{eqn16}
\int_{\mathbb{D}^n} \langle \nabla v ,\nabla \psi \rangle dx = 0.
\end{align}
We can prove this by applying methods in proving Proposition $\ref{thm1}$. First, we show that $v\in W_{loc}^{1,2}(\mathbb{D}^n)$. We denote $\sigma_{\epsilon}$ as in Proposition $\ref{thm1}$, then
\begin{align*}
\int_{\mathbb{D}^n} |\nabla (\sigma_{\epsilon} v)|^2 dx \leq C (1 + \int_{\mathbb{D}^n} v^2 |\nabla \sigma_{\epsilon}|^2 dx) \leq C(1+ \|\sigma_{\epsilon}\|_{W^{1,2}(\mathbb{D}^n)}) \leq C.
\end{align*}

Let $\epsilon \rightarrow 0$, we proved $v \in W_{loc}^{1,2}(\mathbb{D}^n)$. Thus, given any $\psi \in C_0^{\infty}(\mathbb{D}^n)$, we consider
\begin{align*}
|\int_{\mathbb{D}^n} \langle \nabla v ,\nabla \psi \rangle dx |&= |\int_{\mathbb{D}^n} \langle \nabla v ,\nabla \big( (1- \eta_{\epsilon})\psi \big)\rangle dx + \int_{\mathbb{D}^n} \langle \nabla v ,\nabla \big( \eta_{\epsilon}\psi\big) \rangle dx|\\
& = C (\int_{supp(\psi)} |\nabla v|^2 dx )^{\frac{1}{2}} (\int \big( \psi^2 |\nabla \eta_{\epsilon}|^2 + \eta_{\epsilon}^2 |\nabla \psi|^2 \big) dx)^{\frac{1}{2}}\\
& \rightarrow 0 \text{ as } \epsilon \rightarrow 0.
\end{align*}

 Therefore, $(\ref{eqn16})$ is true and it implies that $v \in C^{\infty}(\mathbb{D}^n)$ and thus $\varphi \in W_{loc}^{2,p}(\mathbb{D}^n) \hookrightarrow C^{1,\alpha}(\mathbb{D}^n)$. Hence, for any $\vec x \in \mathbb{C}^n \setminus \{0\}$, $x^i \bar x^j \varphi_{i\bar j} \geq 0 $ holds in distributional sense on $\mathbb{D}^n$ since $ x^i \bar x^j \varphi_{i\bar j} \in L_{loc}^{p}(\mathbb{D}^n)$ for $p>1$ and it's smooth and nonnegative away from divisor. From \cite{Gunning} corollary on page 117, we can conlude that $\varphi \in PSH(\mathbb{D}^n)\cap W_{loc}^{2, p}(\mathbb{D}^n)$.
\end{proof}

\begin{thm}\label{generalize}
Assume $\varphi \in PSH(\mathbb{D}^n) \cap  W_{loc}^{2,p}(\mathbb{D}^n)$ where $p > n$ s.t.
\begin{equation*}
\log \det ( \varphi_{i \bar{j}}) =  u
\end{equation*}
a.e. on $\mathbb{D}^n$. If $u \in C^{\alpha}(\mathbb{D}^n)$ and $\triangle \varphi \leq C$ a.e. on $\mathbb{D}^n$, then $\varphi \in C^{2,\beta}(\mathbb{D}^n)$ for any $0 < \beta <\alpha$.
\end{thm}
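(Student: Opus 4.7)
Following the strategy outlined in the introduction, I would rewrite the complex Monge-Amp\`ere equation $\log\det(\varphi_{i\bar j}) = u$ as a real fully nonlinear uniformly elliptic equation and then invoke Y.~Wang's interior $C^{2,\alpha}$ estimate in its viscosity form.

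The first task is to obtain uniform two-sided bounds on the complex Hessian. Since $u\in C^\alpha(\mathbb{D}^n)$ is bounded, $\det(\varphi_{i\bar j}) = e^u \geq c_0 > 0$ a.e., while the hypothesis $\Delta\varphi\leq C$ controls the trace from above; the elementary fact that a positive semidefinite matrix with bounded trace and bounded-below determinant has uniformly pinched eigenvalues then gives
\begin{equation*}
c\,I \;\leq\; (\varphi_{i\bar j}) \;\leq\; C\,I \qquad \text{a.e. on each compact subset of } \mathbb{D}^n.
\end{equation*}
Writing $z_k=x_k+\sqrt{-1}\,y_k$, the complex Hessian entries are real-linear functions of the real Hessian $D^2\varphi$, so $\det(\varphi_{i\bar j})$ is a polynomial $F(D^2\varphi)$ of $D^2\varphi$, and on the open cone of real Hessians corresponding to pinched positive complex Hessians the operator $G(M):=\log F(M)$ is smooth, concave and uniformly elliptic. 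Thus $\varphi$ satisfies $G(D^2\varphi) = u$ almost everywhere, which is precisely the setting of \cite{YW}.

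The crucial step is to promote this a.e.\ identity to the statement that $\varphi$ is a continuous viscosity solution of $G(D^2\varphi)=u$. Since $p>n$, Sobolev embedding yields $\varphi\in C^{1,\alpha}(\mathbb{D}^n)$, and at any point where a smooth test function $\psi$ touches $\varphi$ from above (resp.\ below), a Taylor--Aleksandrov argument together with the fact that $W^{2,p}$ functions with $p>n$ admit quadratic expansions at a.e.\ point permits one to transfer the a.e.\ equation into the pointwise viscosity inequality $G(D^2\psi)\leq u$ (resp.\ $\geq u$) at the touching point. With $\varphi$ so identified as a viscosity solution, Wang's $C^{2,\alpha}$ estimate for viscosity solutions of concave uniformly elliptic equations with $C^\alpha$ right-hand side gives $\varphi\in C^{2,\beta}_{\mathrm{loc}}(\mathbb{D}^n)$; the slight loss from $\alpha$ to any $\beta<\alpha$ is the usual price paid in the approximation argument underlying the viscosity version of the estimate. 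The step I expect to be most delicate is the passage from a.e.\ to viscosity solution, because $G$ is only elliptic on a proper cone and one needs to rule out touching test functions whose Hessians leave this cone---something that requires careful use of the a.e.\ eigenvalue pinching obtained in the first step.
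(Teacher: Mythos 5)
Your overall strategy coincides with the paper's: pinch the complex Hessian using $\det(\varphi_{i\bar j})=e^u\geq c_0>0$ and $\Delta\varphi\leq C$, recast the complex Monge--Amp\`ere equation as a real fully nonlinear concave uniformly elliptic equation, show $\varphi$ is a viscosity solution, and invoke the $C^{2,\alpha}$ theory for concave equations with $C^\alpha$ right-hand side. But there is a genuine gap exactly at the step you flag as delicate, and the fix you propose does not work. Your operator $G(M)=\log F(M)$ is only defined and uniformly elliptic on the cone of real Hessians whose associated complex Hessian is positive and pinched, and you say you will ``rule out touching test functions whose Hessians leave this cone'' using the a.e.\ pinching of $D^2\varphi$. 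This cannot be done: if $\psi$ touches $\varphi$ from above at $x_0$, the touching only forces $\psi_{i\bar j}(x_0)\geq 0$ (the paper proves even this by restricting to complex lines and using subharmonicity of $\varphi$, since $\varphi$ need not be twice differentiable at $x_0$), with no upper bound on $\psi_{i\bar j}(x_0)$; and if $\psi$ touches from below, $\psi_{i\bar j}(x_0)$ need not be positive at all. So the viscosity formulation for $G$ on the cone is not even well posed. The paper's (and Wang's) resolution is precisely to replace $G$ by the globally defined concave uniformly elliptic extension $F_\theta$ on all of $Sym(2n)$, with $F_\theta=F$ on the pinched set $E_\theta$; the subsolution inequality then uses $F_\theta\geq F$ on the positivity cone (after adding $\epsilon|z-x_0|^2$ to make $F$ defined), and the supersolution inequality uses the second, infimum-of-tangent-planes characterization $F_\theta(M)=\inf\{\mathrm{tr}(p(N)(M-N))+F(N):N\in E_\theta\}$, which makes sense for arbitrary $D^2\psi(x_0)$. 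Without some such extension your scheme has no equation for the test functions to satisfy.

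A second, related gap is the passage from the a.e.\ identity to the viscosity inequalities. Your appeal to a.e.\ quadratic expansions of $W^{2,p}$ functions with $p>n$ is in the right spirit, but the touching point can lie in the null set where the expansion or the a.e.\ equation fails, so a pointwise Taylor argument is not enough; one needs a quantitative maximum-principle argument on small balls. This is in fact the technical heart of the paper's proof: for the subsolution direction it writes $F(D^2\psi_\epsilon)-2u=a^{i\bar j}(\psi_\epsilon-\varphi)_{i\bar j}$ with measurable uniformly elliptic coefficients and applies the strong maximum principle for $W^{2,n}$ functions to contradict the touching, and for the supersolution direction it extracts, in every small ball around $x_0$, a set of positive measure where $\varphi^{i\bar j}(\psi-\varphi)_{i\bar j}\leq 0$ and then uses continuity of $u$ and of $D^2\psi$ to pass to the limit. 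Finally, a minor attribution point: the closing regularity step is Caffarelli--Cabr\'e's $C^{2,\alpha}$ theory for viscosity solutions of concave uniformly elliptic equations (their Theorems 6.6 and 8.1), not an estimate proved in Wang's paper, whose result assumes $C^2$ solutions; the viscosity adaptation is what this theorem of the paper actually supplies.
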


This is our main regularity result and the smooth extension of cscK metric(Corollary $\ref{thm3}$) will be an easy consequence. In Y.Wang's paper \cite{YW}, the main idea is to view the complex Monge-Amp\`ere equation as a real fully nonlinear elliptic equation. We'll follow his idea and main point in proving the above theorem is to show that $\varphi$ satisfies $F_{\theta}(D^2 \varphi) \in C^{\alpha}$ in the viscosity sense for the elliptic operator $F_{\theta}$ introduced by Wang. Thus, before entering into the proof, let's recall some preliminaries from Wang's paper.

Let $Sym(2n)$ be the space of $2n \times 2n$ real symmetric matrices and $Herm(n)$ be the space of $n \times n$ complex Hermitian matrices.
Fix the following canonical complex structure
\begin{equation*}
J = \left( \begin{array}{cc}
0 & -I\\
I & 0
\end{array}\right),  I  \; \text{is the} \; n\times n \;\text{identity matrix}\;
\end{equation*}
on $\mathbb{R}^{2n}$. Then $Herm(n)$ can be identified with the subspace
\begin{equation*}
\{M : [M, J]= MJ - JM = 0\} \subset Sym(2n),
\end{equation*}
by the map
\begin{equation*}
\iota: H = A + iB \mapsto \left( \begin{array}{cc} A & -B \\B & A\end{array}\right).
\end{equation*}
We view $Herm(n)$ as a subspace of $Sym(2n)$ according to the above identification. The complex structure $J$ gives rise to the canonical projection $p: Sym(2n) \rightarrow Herm(n)$
\begin{equation*}
p: M \mapsto \frac{M + J^t M J}{2}.
\end{equation*}
By a straightforward calculation, we get
\begin{claim}
If $\iota(H) =  p(M)$, $M \in Sym(2n)$ and $H \in Herm(n)$, then $\det_{\mathbb{R}}^{\frac{1}{2}} [p(M)] = \det_{\mathbb{C}}(H)$.
\end{claim}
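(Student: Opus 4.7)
The claim is a purely linear-algebraic statement: once one knows that $p(M) = \iota(H)$, the left-hand side depends only on $H$, and one has to match the real $2n\times 2n$ determinant of $\iota(H)$ with the square of the complex $n\times n$ determinant of $H$. So the plan is to forget $M$ entirely and prove the identity $\det_{\mathbb{R}}\iota(H) = (\det_{\mathbb{C}} H)^2$ for every $H\in \mathrm{Herm}(n)$.

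Write $H = A + iB$ with $A = A^t$ and $B = -B^t$, so that
\begin{equation*}
\iota(H) = \begin{pmatrix} A & -B \\ B & A \end{pmatrix} \in \mathrm{Sym}(2n).
\end{equation*}
The key step is a block-row/column reduction over $\mathbb{C}$: left-multiplying by $\begin{pmatrix} I & iI \\ 0 & I\end{pmatrix}$ and right-multiplying by $\begin{pmatrix} I & -iI \\ 0 & I\end{pmatrix}$ — both of which have determinant $1$ — converts $\iota(H)$ into the block lower-triangular matrix
\begin{equation*}
\begin{pmatrix} A+iB & 0 \\ B & A-iB \end{pmatrix}.
\end{equation*}
Taking complex determinants of both sides yields
\begin{equation*}
\det{}_{\mathbb{R}}\iota(H) = \det(A+iB)\cdot\det(A-iB) = \det{}_{\mathbb{C}}(H)\cdot\det{}_{\mathbb{C}}(\overline{H}) = |\det{}_{\mathbb{C}}(H)|^2.
\end{equation*}
Since $H$ is Hermitian, $\det_{\mathbb{C}} H$ is real, so $|\det_{\mathbb{C}} H|^2 = (\det_{\mathbb{C}} H)^2$, giving $\det_{\mathbb{R}}\iota(H) = (\det_{\mathbb{C}} H)^2$.

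Finally, taking square roots gives the claim $\det_{\mathbb{R}}^{1/2}[p(M)] = \det_{\mathbb{C}}(H)$, where the positive branch of the square root is the intended one; in the application to the complex Monge–Amp\`ere equation, $H = \varphi_{i\bar j}$ is positive definite so $\det_{\mathbb{C}}(H) > 0$ and no sign ambiguity arises. No step here is a genuine obstacle: the only thing to be careful about is tracking that the complex row operations used to triangularize $\iota(H)$ have determinant $1$, and that Hermiticity of $H$ forces $\det_{\mathbb{C}} H$ to be real so the square-root statement is unambiguous.
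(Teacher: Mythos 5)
Your proposal is correct, and in fact it supplies the computation the paper omits: the paper simply asserts the identity ``by a straightforward calculation'' with no written proof. Your block reduction of $\iota(H)$ to $\begin{pmatrix} A+iB & 0 \\ B & A-iB\end{pmatrix}$ by determinant-one complex elementary block operations is exactly the standard verification that $\det_{\mathbb{R}}\iota(H) = |\det_{\mathbb{C}}(H)|^2$, and your remark that the square-root statement needs $\det_{\mathbb{C}}(H)\ge 0$ is a fair (and correctly resolved) caveat, since in the paper $F$ is only used on the set where $p(M)>0$.
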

Denote 
$$F(M) :=  \log \det [p(M)]$$

And $F$ is a concave function on the set $\{M \in Sym(2n): p(M) >0 \}$. The definitions here are slightly different than the ones in Y.Wang's paper.

\begin{defn}
Given $0 <\theta < 1$, let $E_{\theta} \subset Sym(2n)$ consist of matrices $N$ such that
\begin{equation*}
\theta I_{2n} \leq p(N) \leq \theta^{-1} I_{2n}.
\end{equation*}
\end{defn}
Define for all $M \in Sym(2n)$,
\begin{equation*}
 F_{\theta}(M) = \sup\{G(M)| G \text{ is concave on } Sym(2n), G(X) = F(X) \text{ for any } X \in E_{\theta}\}.
\end{equation*}
We introduce another alternative equivalent definition of $F_\theta$ that we'll use later.
\begin{lem}
The following definitions are equivalent
\begin{enumerate}
\item $F_{\theta}(M) = \sup\{G(M)| G \text{ is concave on } Sym(2n), G(X) = F(X) \text{ for any } X \in E_{\theta}\}$.
\item $F_{\theta}(M) = \inf\{tr(p(N) (M - N)) + F(N) | N \in E_\theta\}$.
\end{enumerate}
\end{lem}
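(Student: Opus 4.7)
The plan is to verify that the explicit formula (2) is both a valid concave extension of $F\big|_{E_\theta}$ to $Sym(2n)$ (hence one of the candidates inside the sup in (1)) and simultaneously an upper bound for every other such candidate (hence the maximal one). Combining these identifies it with $F_\theta$.

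Let me denote the right-hand side of (2) by $\widetilde F_\theta(M)$. Since it is a pointwise infimum in $M$ of functions that are affine in $M$, $\widetilde F_\theta$ is concave on $Sym(2n)$. To show $\widetilde F_\theta \equiv F$ on $E_\theta$: for any $M \in E_\theta$, concavity of $F$ on its natural domain $\{M:p(M)>0\}\supset E_\theta$ gives the supporting-hyperplane inequality
$$F(M) \;\leq\; F(N) + \mathrm{tr}\bigl(p(N)\,(M-N)\bigr), \qquad N \in E_\theta,$$
the right-hand side being just the first-order Taylor expansion of $F$ at $N$. Setting $N=M$ produces equality, so the infimum equals $F(M)$. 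Hence $\widetilde F_\theta$ is itself one of the concave extensions appearing in the sup in (1), and the definition of (1) yields $\widetilde F_\theta(M) \leq F_\theta(M)$ for every $M$.

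For the reverse inequality, let $G$ be any concave function on $Sym(2n)$ with $G = F$ on $E_\theta$. Because $\theta < 1$, $E_\theta$ has non-empty interior in $Sym(2n)$; at any $N \in \mathrm{int}(E_\theta)$, $G$ agrees with the smooth function $F$ throughout a neighborhood of $N$, so $G$ is differentiable at $N$ with $\nabla G(N) = \nabla F(N)$, and the superdifferential of $G$ at $N$ is forced to be the singleton represented by $p(N)$. Concavity of $G$ then gives
$$G(M) \;\leq\; G(N) + \mathrm{tr}\bigl(p(N)\,(M-N)\bigr) \;=\; F(N) + \mathrm{tr}\bigl(p(N)\,(M-N)\bigr)$$
for every $M \in Sym(2n)$. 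Taking the infimum over $N \in \mathrm{int}(E_\theta)$, and then enlarging to all of $E_\theta$ via continuity of $N \mapsto F(N) + \mathrm{tr}(p(N)(M-N))$ on the closed convex set $E_\theta$, yields $G(M) \leq \widetilde F_\theta(M)$. Taking the supremum over such $G$ gives $F_\theta(M) \leq \widetilde F_\theta(M)$, and we conclude $F_\theta = \widetilde F_\theta$.

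The main point to be careful about is that a concave extension $G$ has, a priori, only a set-valued superdifferential at each point; the key observation is that $G$ being pinned to the smooth function $F$ on a full neighborhood of each $N \in \mathrm{int}(E_\theta)$ collapses this superdifferential to the classical gradient, which is what lets us use the concrete supporting hyperplane with slope $p(N)$ appearing in (2). The rest of the argument, including passing from the infimum over $\mathrm{int}(E_\theta)$ to that over $E_\theta$, is a routine continuity manipulation.
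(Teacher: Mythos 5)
Your overall strategy coincides with the paper's: show that the infimum formula is concave and restricts to $F$ on $E_\theta$, hence is one of the competitors in definition (1); then show that every concave extension $G$ lies below the supporting hyperplane of $F$ at each $N \in E_\theta$, hence below the infimum. On one point you are in fact more careful than the paper: the paper invokes the supporting plane of $G$ with slope determined by $F$ at arbitrary $N \in E_\theta$, whereas you correctly observe that the superdifferential of $G$ is only pinned to a singleton at interior points of $E_\theta$, and you recover the infimum over all of $E_\theta$ by density and continuity. That part of your argument is sound.

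The genuine problem is your identification of the slope: you assert that $F(N) + \mathrm{tr}\bigl(p(N)(M-N)\bigr)$ is the first-order Taylor expansion of $F$ at $N$, i.e.\ that the gradient of $F(M) = \log\det[p(M)]$ is represented by $p(N)$. It is not: a direct computation gives $dF|_{N}(P) = \mathrm{tr}\bigl(p(N)^{-1}p(P)\bigr) = \mathrm{tr}\bigl(p(N)^{-1}P\bigr)$, so the correct slope is $p(N)^{-1}$ — exactly what the paper uses later in the display $(\ref{eqn20})$ in the proof of Theorem \ref{generalize}, where $\mathrm{tr}[p(D^2\varphi)^{-1}p(\cdot)]$ appears. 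With the slope $p(N)$ taken literally, both of your key steps fail: already for $n=1$, where $p(M) = \frac{\mathrm{tr}M}{2}I_2$ and $F(M) = 2\log\frac{\mathrm{tr}M}{2}$, the inequality $F(M) \leq F(N) + \mathrm{tr}\bigl(p(N)(M-N)\bigr)$ is false for suitable $M, N \in E_\theta$, and consequently the infimum in (2) drops strictly below $F$ on $E_\theta$, so the function defined by (2) is not even a competitor in (1). In other words, the lemma as literally written cannot be proved, and your argument "proves" it only by silently replacing $\nabla F(N)$ with $p(N)$; the paper's own proof commits the same slip, so this is best understood as a typo in the statement. Once the expression in (2) is read as $\mathrm{tr}\bigl(p(N)^{-1}(M-N)\bigr) + F(N)$, your proof — equality at $N = M$ for one direction, the singleton superdifferential $\{p(N)^{-1}\}$ at interior points plus continuity for the other — goes through verbatim and matches the paper's argument.
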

\begin{proof}
Denote the function defined using definition (1)(respectively (2)) as $F_1$(respectively, $F_2$). So we need to show that $F_1 = F_2$. First of all, we show that $F_1 \leq F_2$. For any concave function $G$ on $Sym(2n)$ with $G(X) = F(X)$ on $E_{\theta}$, we have that the graph of $G$ is below the tangent plane of $G$ at $N \in E_\theta$, i.e. for any $M \in Sym(2n)$
\begin{align}
G(M) \leq tr(p(N) (M-N)) + F(N) \text{ for any } N\in E_\theta.
\end{align}
Thus $G(M) \leq F_2(M)$ for any $M \in Sym(2n)$ and then $F_1 \leq F_2$.

Next, we show that $F_1 \geq F_2$. We indeed show that $F_2$ defines a concave function on $Sym(2n)$ with $F_2(X) = F(X)$ on $E_\theta$. To show $F_2$ is concave, we consider for $M, L \in Sym(2n)$,
\begin{align*}
&tr(p(N)(\frac{M+L}{2}-N)) + F(N) \\
& = \frac{1}{2}\big(tr(p(N)(M -N )) + F(N)\big) + \frac{1}{2}\big(tr(p(N)(L -N )) + F(N) \big)\\
& \geq \frac{1}{2} \big( F_2(M) + F_2(L)  \big)
\end{align*}
Thus, $F_2(\frac{M+L}{2}) \geq \frac{1}{2} \big( F_2(M) + F_2(L)  \big)$. $F_2 = F$ on $E_\theta$ follows from the fact that $F$ is concave on $E_\theta$. Thus this ends the proof of the lemma.
\end{proof}

Following the idea in \cite[Lemma 3.4]{YW}, we can prove a similar lemma in which we take advantages of the equivalent definition (2) of $F_\theta$ in the above lemma.

\begin{lem}
$F_{\theta}$ is concave and uniformly elliptic in $Sym(2n)$, i.e. there exists $\bar{\theta} > 0$ only depending on $\theta$ such that
\begin{equation*}
\bar{\theta}\| P \| \leq F_{\theta}(M + P) - F_{\theta}(M) \leq \bar{\theta}^{-1}\|P\|, \forall M \in Sym(2n), P \geq 0.
\end{equation*}
Moreover $F_{\theta}(M) =  F(M)$ for all $M \in E_{\theta}$.
\end{lem}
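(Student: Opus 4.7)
The approach is to exploit the dual characterization (2) of $F_\theta$ from the previous lemma, which presents $F_\theta$ as a pointwise infimum over $N \in E_\theta$ of a family of affine functions of $M$. Concavity of $F_\theta$ on $Sym(2n)$ is then immediate, since an infimum of affine functions is concave.

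To verify $F_\theta(M) = F(M)$ on $E_\theta$, I would argue both inequalities. The upper bound is obtained by taking $N = M$ in the infimum, reducing the expression to $F(M)$. For the reverse inequality, since $E_\theta$ is convex (as $p$ is linear) and $F = \log\det\circ p$ is smooth and concave on it (as $\log\det$ is concave on positive Hermitian matrices), the graph of $F$ lies beneath each of its supporting hyperplanes at points of $E_\theta$. This is exactly the inequality $F(M) \leq tr(p(N)(M-N)) + F(N)$ for every $N \in E_\theta$, yielding $F(M) \leq F_\theta(M)$.

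For the uniform ellipticity estimate, the key observation is that the slopes of the affine functions appearing in the infimum are uniformly pinched between two positive constants depending only on $\theta$. A direct computation shows that the gradient of $F$ at $N$, viewed inside $Sym(2n)$ via the embedding of $Herm(n)$, equals $p(N)^{-1}$, and membership in $E_\theta$ forces $\theta I \leq p(N)^{-1} \leq \theta^{-1} I$. Given $P \geq 0$, I would choose a (near-)minimizer $N_0 \in E_\theta$ of the infimum defining $F_\theta(M)$ and bound
\begin{equation*}
F_\theta(M+P) - F_\theta(M) \leq tr(p(N_0)^{-1} P) \leq \theta^{-1}\, tr(P),
\end{equation*}
while obtaining the matching lower bound symmetrically from a (near-)minimizer at $M+P$. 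Converting between $tr(P)$ and $\|P\|$ for $P \geq 0$ absorbs a dimensional factor into the final constant $\bar{\theta}$.

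The one point requiring care is non-attainment of the infimum: $E_\theta$ is unbounded in directions lying in the kernel of $p$, so genuine minimizers need not exist. I would circumvent this by observing that the affine functional depends on $N$ only through $p(N)$, and reparametrizing the infimum as a minimization over $A = p(N)$ in the compact convex set $\{A \in Herm(n) : \theta I \leq A \leq \theta^{-1} I\}$, on which the minimum is attained and the ellipticity argument above goes through directly.
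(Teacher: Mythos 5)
Your proposal is correct and is essentially the route the paper intends: the paper gives no details for this lemma, deferring to Wang's Lemma 3.4 and to the equivalent definition (2), and your argument (concavity as an infimum of affine functions, $F_\theta=F$ on $E_\theta$ via supporting hyperplanes, ellipticity from the slopes being pinched between $\theta I$ and $\theta^{-1}I$, with the attainment issue handled by reparametrizing over $A=p(N)$ in a compact set, using that $\mathrm{tr}(p(N)^{-1}N)=2n$) is exactly that proof spelled out. One caution: the slope in the paper's stated definition (2), $\mathrm{tr}(p(N)(M-N))$, is a typo for the tangent-plane slope $\mathrm{tr}(p(N)^{-1}(M-N))$ (compare (\ref{eqn20}), where $p(D^2\varphi)^{-1}$ appears); your ellipticity step correctly uses $p(N_0)^{-1}$, but in your verification of $F_\theta=F$ on $E_\theta$ the supporting-hyperplane inequality must likewise be written with $p(N)^{-1}$ -- as written with $p(N)$ it is false (for instance $n=1$, $N=2I_2$, $M=I_2$, $\theta=1/2$ gives $F(M)=0$ but $\mathrm{tr}(p(N)(M-N))+F(N)=-4+2\log 2<0$).
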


Next, let's define the viscosity solution of equation
\begin{equation}\label{eqn7}
F_{\theta}(D^2 \varphi) = 2u.
\end{equation}

\begin{defn}[\cite{CA}, Definition 2.3]
Let $\Omega$ be a domain in $\mathbb{R}^n$. A continuous function $\varphi$ in $\Omega$ is a viscosity subsolution (resp. viscosity supersolution) of $(\ref{eqn7})$ in $\Omega$, when the following condition holds:
if $x_0 \in \Omega$, $\psi \in C^2(\Omega)$ and $\varphi - \psi$ has a local maximum at $x_0$ then
\begin{equation}
F_{\theta} (D^2 \psi (x_0)) \geq 2u(x_0)
\end{equation}
(resp. if $\varphi - \psi$ has a local minimum at $x_0$ then $F_{\theta}(D^2 \psi (x_0)) \leq 2 u(x_0)$). We say that $\varphi$ is a viscosity solution of $(\ref{eqn7})$ when it is subsolution and supersolution.
\end{defn}

Now we're ready to prove Theorem $\ref{generalize}$.

\begin{proof}

 From the assumptions, we can find a positive number $\theta > 0$ s.t. $\theta  \delta_{i\bar{j}} \leq \varphi_{i\bar{j}} \leq \theta^{-1}\delta_{i\bar{j}}$ a.e. on $\mathbb{D}^n$. From now on, $\theta$ is fixed and we use this fixed number to define $F_{\theta}$. Notice, in our case, $F_{\theta} (D^2{\varphi}) = F(D^2{\varphi}) = 2\log \det (\varphi_{i\bar{j}}) = 2u(x)$ a.e. on $\mathbb{D}^n$.

We first prove that $\varphi$ is a viscosity subsolution of $(\ref{eqn7})$ on $\mathbb{D}^n$. For any $x_0 \in \mathbb{D}^n$ and $\psi \in C^{2}(\mathbb{D}^n)$ s.t.
\begin{enumerate}
\item $\psi (x_0) = \varphi(x_0)$,
\item $\psi(x) \geq \varphi (x) $ in a small neighborhood of $x_0$,
\end{enumerate}
we need to show that $F_{\theta}(D^2 \psi(x_0)) \geq 2u(x_0)$. 

First, we claim that matrix
\begin{align}\label{eqn18}
\psi_{i\bar{j}}(x_0) \geq 0.  
\end{align}
We prove $(\ref{eqn18})$ by contradiction. Suppose $(\ref{eqn18})$ doesn't hold, then without loss of generality, we can assume $\psi_{1\bar{1}}(x_0) < 0$. Therefore, $\psi_{1\bar{1}} < 0$ in a small neighborhood of $x_0$, say $U_{x_0}$. Consider $v(z) = (\varphi - \psi) (x_0 + (z, 0, \cdots , 0)) $, which is a nonpositive subharmonic function on its domain with $v(0) = 0$. Moreover, since $\psi_{1\bar{1}} < 0$, $v$ is a strictly subharmonic function i.e. for any $r > 0$, we have
\begin{align}
0 = v(0) < \frac{1}{2\pi} \int_{0}^{2\pi} v(r e^{i\theta}) d\theta \leq 0
\end{align}
Thus, we get a contradiction.

Denote $\psi_{\epsilon} = \psi + \epsilon |z- x_0|^2$ for $\epsilon >0$. Thus, $(\psi_{\epsilon})_{i\bar{j}}(x_0) \geq \epsilon \delta_{i \bar{j}}$. Here we introduce $\psi_\epsilon $ is just to make $F(D^2 \psi_\epsilon)$ well-defined.

Second, we claim that $F(D^2 \psi_{\epsilon}) (x_0) \geq 2u(x_0)$. Again we prove by contradiction. Suppose not, we can get that $F(D^2 \psi_{\epsilon}) (x) - 2u(x) <0$ in a small neighborhood of $x_0$, saying $U_{x_0}$. Consider for a.e. $x\in U_{x_0}$,
\begin{equation}\label{eqn19}
\begin{split}
0 < 2u(x) - F(D^{2} \psi_{\epsilon}) (x) &=  2 \log \det (\varphi_{i\bar{j}}) (x) - 2 \log \det ((\psi_{\epsilon})_{i \bar{j}}) (x) \\
&= 2\int_{0}^1 \frac{d}{dt}  \log \det \big( \big( t \varphi + (1-t) \psi_{\epsilon}   \big)_{i\bar{j}} \big) (x)dt \\
&= a^{i \bar{j}}(x) \frac{\partial^2}{\partial z_i \partial \bar{z_j}}(\varphi - \psi_{\epsilon})(x), 
\end{split}
\end{equation}
where $[a^{i\bar{j}}(x)]= 2\int_{0}^{1} [ \big(t \varphi + (1-t)\psi_{\epsilon} \big)_{k\bar{l}}(x) ]^{-1} dt$ is a measurable positive definite matrix valued function on $\mathbb{D}^n$ with upper and lower bounds for the matrix $[a^{i\bar{j}}]$. Also notice that $(\ref{eqn19})$ holds in $W^{2,n}$ sense, since $\varphi \in W_{loc}^{2,p}(\mathbb{D}^n)$ for any $p>1$. According to the strong maximum principle (Theorem 9.6 in \cite{Trudinger1}), $\varphi -\psi_{\epsilon} = 0$, contradicting $(\ref{eqn19})$. 

Since $F$ is concave on $P = \{M \in Sym(2n): p(M) > 0 \}$ and $F_{\theta}$ is the supremum of all possible concave extensions of $F$ by definition, $F(M) \leq F_{\theta}(M)$ for any $M \in P$. Therefore,
\begin{equation*}
F_{\theta} (D^2 \psi_{\epsilon}) (x_0)\geq F(D^2 \psi_{\epsilon})(x_0) \geq 2u(x_0).
\end{equation*}
We get that 
\begin{align}
F_{\theta} (D^2 \psi (x_0)+ 2\epsilon I) \geq 2u(x_0).
\end{align}
By the uniform ellipticity of $F_{\theta}$, we can let $\epsilon \rightarrow 0$ to conclude that $F_{\theta}(D^2 \psi)(x_0) \geq 2u(x_0)$.

Next, we prove that $u$ is a viscosity supersolution of $(\ref{eqn7})$ on $\mathbb{D}^n$. Let $\psi \in C^{2}(\mathbb{D}^n)$ satisfy that
\begin{enumerate}
\item $\psi(x_0) = \varphi(x_0)$,
\item $\psi (x) \leq \varphi(x) $ in a small neighborhood of $x_0$,
\end{enumerate}
we need to show that $F_{\theta}(D^2 \psi) (x_0) \leq 2u(x_0)$. By definition of $F_{\theta}$, we have for a.e. $x \in \mathbb{D}^n$,
\begin{equation}\label{eqn20}
\begin{split}
F_{\theta}(D^2 \psi ) (x_0) &\leq tr[p(D^2 \varphi (x))^{-1} p(D^2\psi (x_0) - D^2 \varphi(x))] + F(D^2 \varphi)(x), \\
& = 2 \varphi^{i\bar{j}}(x) (\psi_{i\bar{j}}(x_0) - \varphi_{i\bar{j}}(x)) + 2u(x),\\
& = 2 \varphi^{i\bar{j}}(x) \big(\psi_{i\bar{j}}(x) - \varphi_{i\bar{j}}(x)\big) + 2u(x) + 2\varphi^{i\bar{j}}(x)(\psi_{i\bar{j}} (x_0)- \psi_{i\bar{j}}(x) ).
\end{split}
\end{equation}
Denote $L: W_{loc}^{2,n}(\mathbb{D}^n) \rightarrow L_{loc}^n (\mathbb{D}^n)$, $v \mapsto \varphi^{i\bar{j}}(x) v_{i\bar{j}}(x)$. Suppose $L(\psi - \varphi) >0$ a.e. on $B_{\epsilon}(x_0)$ for some $\epsilon >0$, then by the strong maximum principle, we get a contradiction since $\psi - \varphi$ achieves its maximum in the interior at $x_0$. Therefore, for any $\epsilon > 0$, there exists $O_{\epsilon} \subset B_{\epsilon}(x_0)$ with positive measure s.t. $L(\psi - \varphi) \leq 0$ a.e. on $O_{\epsilon}$.

Consider $(\ref{eqn20})$ on $O_{\epsilon}$ for $\epsilon$ small.
\begin{align}
F_{\theta}(D^2 \psi)(x_0) \leq 2u(x_0) + 2\sup_{x \in O_\epsilon}|u(x) - u(x_0)| + C \sup_{x\in O_\epsilon} |D^2 \psi(x) - D^2\psi (x_0)|.
\end{align}
Since $O_{\epsilon} \subset B_{\epsilon}(x_0)$, we let $\epsilon \rightarrow 0$ and get $F_{\theta}(D^2 \psi)(x_0) \leq 2u(x_0)$.

Since $\varphi$ is a viscosity solution to $(\ref{eqn7})$, by the standard nonlinear elliptic theory (Theorem 6.6 and Theorem 8.1 in \cite{CA}), $\varphi \in C^{2,\beta}(\mathbb{D}^n)$, for any $0 < \beta < \alpha$.
\end{proof}

\begin{cor}\label{thm3}
Suppose $\varphi \in C_{loc}^{\infty}(\mathbb{D}^n \setminus \{ z_1 = 0 \})\cap L^{\infty}_{loc} (\mathbb{D}^n ) \cap PSH (\mathbb{D}^n \setminus S)$ defines a cscK metric $g_\varphi = \sqrt{-1}\sum \varphi_{i\bar{j}}\mathrm{d}z^i\wedge\mathrm{d}\bar{z}^j$ on $(\mathbb{D}^n \setminus \{ z_1 = 0\})$. If there exists $C>0$ and $\theta(r) = o(1)$ as $r\rightarrow 0$ s.t. $\frac{1}{C} \exp\{- \theta(|z_1|)\big| \log |z_1| \big|^{\frac{1}{2}}\} I \leq g_{\varphi} \leq C I$, then $g_\varphi$ extends smoothly to a cscK metric on $\mathbb{D}^n$.
\end{cor}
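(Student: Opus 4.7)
The plan is to assemble the pieces developed in Sections \ref{sec1} and \ref{sec2}, apply Theorem \ref{generalize} as the crucial step, and then bootstrap to full smoothness.

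First, I would verify that the hypotheses of Section \ref{sec1} and Theorem \ref{thm7} are all satisfied. From the assumption $g_\varphi \leq C I$ we have $0 \leq \Delta\varphi \leq C n$ on $\mathbb{D}^n \setminus \{z_1=0\}$, and $\varphi \in L^\infty(\mathbb{D}^n) \cap PSH(\mathbb{D}^n \setminus \{z_1=0\})$. Proposition \ref{thm1} and Corollary \ref{thm10}, together with the paragraph after Corollary \ref{thm10}, give $u = \log\det g_\varphi \in C^\alpha(\mathbb{D}^n)$ for some $\alpha \in (0,1)$, and upgrade the pinching to the uniform two-sided bound $C^{-1} I \leq g_\varphi \leq C I$ on $\mathbb{D}^n \setminus \{z_1=0\}$. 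Applying Theorem \ref{thm7} (with $f = \Delta\varphi$) then yields $\varphi \in W^{2,p}_{loc}(\mathbb{D}^n) \cap PSH(\mathbb{D}^n)$ for every $1 < p < \infty$; in particular $\varphi \in C^{1,\alpha}(\mathbb{D}^n)$, and because the lower bound on $g_\varphi$ is uniform off a null set, $\theta \delta_{i\bar j} \leq \varphi_{i\bar j} \leq \theta^{-1}\delta_{i\bar j}$ holds a.e.\ on $\mathbb{D}^n$ for some $\theta > 0$.

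Next, I would feed these facts into Theorem \ref{generalize}: the Monge--Amp\`ere relation $\log\det(\varphi_{i\bar j}) = u$ holds a.e.\ on $\mathbb{D}^n$ with $u \in C^\alpha$ and $\Delta\varphi \in L^\infty$, so $\varphi \in C^{2,\beta}(\mathbb{D}^n)$ for every $0 < \beta < \alpha$. At this point $g_\varphi$ already extends continuously across $\{z_1=0\}$ as a genuine K\"ahler metric with H\"older-continuous coefficients.

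The final step is a standard bootstrap. Since $g_\varphi \in C^\beta(\mathbb{D}^n)$ is uniformly positive, the cscK equation $\Delta_\varphi u = -K$ is a linear elliptic equation with H\"older coefficients satisfied in the classical sense on $\mathbb{D}^n \setminus \{z_1=0\}$ and, because $\{z_1=0\}$ is a set of $(2n{-}2)$-Hausdorff measure and $u \in C^\alpha \subset W^{1,2}_{loc}$, a distributional argument (integration by parts against $(1-\eta_\varepsilon)\psi$ exactly as in the proof of Proposition \ref{thm1} and Theorem \ref{thm7}) shows it holds across the divisor as well. Schauder theory then gives $u \in C^{2,\beta}(\mathbb{D}^n)$. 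Inserting this improved regularity of $u$ into $\log\det(\varphi_{i\bar j}) = u$ and invoking Caffarelli's interior $C^{2,\alpha}$ estimates for the complex Monge--Amp\`ere equation, upgraded by standard Schauder iteration (or equivalently Evans--Krylov for the concave operator $F$ followed by the usual differentiation-of-the-equation argument), yields $\varphi \in C^{4,\beta}$, then $C^{k,\beta}$ for all $k$, hence $\varphi \in C^\infty(\mathbb{D}^n)$.

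The only substantive difficulty is Theorem \ref{generalize} itself, which has already been proven; the corollary is essentially bookkeeping, and the one place where care is needed is the extension of the linear equation $\Delta_\varphi u = -K$ across $\{z_1=0\}$ in the distributional sense, which is handled by the same logarithmic cut-off trick used earlier in the paper.
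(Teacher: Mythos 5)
Your proposal is correct and follows essentially the same route as the paper: combine Proposition \ref{thm1}, Corollary \ref{thm10}, Theorem \ref{thm7} and Theorem \ref{generalize} to get $\varphi \in C^{2,\beta}(\mathbb{D}^n)$, then bootstrap through the linear equation $\Delta_\varphi u = -K$ and the Monge--Amp\`ere relation to reach $C^\infty$. The only cosmetic difference is that the paper cites Lemma 17.16 of Gilbarg--Trudinger for the step $\varphi \in C^{2,\beta} \Rightarrow C^{4,\beta}$, where you invoke Schauder/Evans--Krylov-type arguments; both are standard and equivalent here.
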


\begin{proof}
Using Proposition $\ref{thm1}$, Theorem $\ref{thm7}$ and Theorem $\ref{generalize}$, we conclude that $\varphi \in C^{2,\beta}(\mathbb{D}^n)$. And consider $(\ref{eqn11})$, we get $ u \in C^{2,\beta}(\mathbb{D}^n)$. Using Lemma 17.16 in \cite{Trudinger1}, we can get $\varphi \in C^{4,\beta}(\mathbb{D}^n)$. By bootstrapping, we can get that $ \varphi \in C^{\infty}(\mathbb{D}^n)$.
\end{proof}

\section{Singularites with real codimension $d > 2$}\label{sec3}
In this section, we consider the case when the singular locus is a closed set $S$ with real Minkowski codimension $d > 2$. Definitions and properties of Minkowski dimension can be found in Chapter 3 in \cite{KF}.

\begin{thm}\label{thm4}
Suppose $\varphi \in C_{loc}^{\infty}(\mathbb{D}^n \setminus S) \cap L^{\infty}(\mathbb{D}^n) \cap PSH(\mathbb{D}^n \setminus S)$ defines a cscK metric on $(\mathbb{D}^n \setminus S)$. If 
\begin{align}\label{eqn21}
\lambda(z) \delta_{i\bar{j}} \leq \varphi_{i\bar{j}} \leq C\delta_{i\bar{j}} 
\end{align}
with $\lambda^{-1} \in L^{p}(\mathbb{D}^n)$ for some $p > n(n-1)$, then $g_\varphi$ extends smoothly to a cscK metric on $\mathbb{D}^n$.
\end{thm}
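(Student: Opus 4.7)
The strategy is to retrace the proof of Corollary \ref{thm3} step by step, replacing the one-variable logarithmic cut-off by a cut-off adapted to the higher-codimension set $S$. Because $S$ has Minkowski codimension $d>2$, the tube $S_\epsilon=\{x:\operatorname{dist}(x,S)<\epsilon\}$ satisfies $|S_\epsilon|\le C\epsilon^{d}$ for all sufficiently small $\epsilon$. Pick $\eta_\epsilon\in C^\infty_c(\mathbb{D}^n)$ with $\eta_\epsilon\equiv 1$ on $S_{\epsilon/2}$, $\eta_\epsilon\equiv 0$ outside $S_\epsilon$, and $|\nabla\eta_\epsilon|\le C/\epsilon$. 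Then
\begin{equation*}
\int_{\mathbb{D}^n}|\nabla\eta_\epsilon|^{2s}\,dx\;\le\;C\epsilon^{d-2s}\longrightarrow 0\quad\text{for every } s<d/2,
\end{equation*}
in particular $\|\nabla\eta_\epsilon\|_{L^2}\to 0$. This is the only input needed to feed the cut-off arguments of Sections \ref{sec1}--\ref{sec2}.

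For the linear problem I would repeat the proof of Proposition \ref{thm1} with this $\eta_\epsilon$. From $(\ref{eqn21})$ and the assumption $\lambda^{-1}\in L^p$, one has $u\le C$ and $u\ge -n\log\lambda^{-1}-C$, hence $u^2\in L^q_{loc}(\mathbb{D}^n)$ for every $q<\infty$. Choosing an exponent $r'$ slightly smaller than $d/2$, H\"older's inequality gives
\begin{equation*}
II\;\le\;C\int u^2|\nabla\sigma_\epsilon|^2\,dx\;\le\;C\|u^2\|_{L^{r}}\,\|\nabla\eta_\epsilon\|_{L^{2r'}}^{2}+C\,\|u\|_{L^2}^2\;\longrightarrow\;C,
\end{equation*}
where $r$ is the conjugate exponent of $r'$, while term $I$ is bounded as before since $ue^u\in L^1_{loc}$. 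Thus $u\in \mathcal H(\mathcal A,0.5\mathbb{D}^n)$ solves $(\ref{eqn15})$. The crucial algebraic observation comes next: writing $\lambda_1\le\dots\le\lambda_n$ for the eigenvalues of $g_\varphi$, the eigenvalues of $\mathcal A=g_\varphi^{-1}\det g_\varphi$ are $(\det g_\varphi)/\lambda_i$, so
\begin{equation*}
\lambda_{\mathcal A}\;\ge\;\lambda_1\cdots\lambda_{n-1}\;\ge\;\lambda^{n-1},\qquad \lambda_{\mathcal A}^{-1}\;\le\;\lambda^{-(n-1)}\in L^{p/(n-1)}(\mathbb{D}^n).
\end{equation*}
The hypothesis $p>n(n-1)$ is exactly what makes $p/(n-1)>n$, which is Trudinger's threshold in Theorem 5.1 and Corollary 5.4 of \cite{Trudinger2}; these theorems then yield $u\in L^\infty_{loc}$. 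This forces $\det g_\varphi\ge\epsilon_0>0$, which together with $g_\varphi\le CI$ gives uniform ellipticity of $g_\varphi$, and the classical De Giorgi--Nash theory (Theorem 8.24 of \cite{Trudinger1}) then upgrades $u$ to $C^\alpha(\mathbb{D}^n)$.

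For the nonlinear step, the proof of Theorem \ref{thm7} uses only $v\in L^\infty_{loc}$ and a cut-off with $\|\nabla\eta_\epsilon\|_{L^2}\to 0$ and $\eta_\epsilon\to 0$ a.e., both of which we now have; it therefore delivers $\varphi\in W^{2,p}_{loc}(\mathbb{D}^n)\cap PSH(\mathbb{D}^n)$ for every $1<p<\infty$. Theorem \ref{generalize} is strictly local and does not see the geometry of $S$, so it applies directly and yields $\varphi\in C^{2,\beta}(\mathbb{D}^n)$; bootstrapping as in Corollary \ref{thm3} then produces smoothness. I expect the main obstacle to lie in the linear step: one must simultaneously check that the new $\eta_\epsilon$ keeps terms $I$ and $II$ controlled, and pin down the algebraic bound $\lambda_{\mathcal A}^{-1}\le C\lambda^{-(n-1)}$ so that the exponent threshold $p>n(n-1)$ matches Trudinger's requirement $t>n$. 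Everything downstream is an essentially mechanical transposition of Sections \ref{sec1}--\ref{sec2}.
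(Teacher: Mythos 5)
Your proposal is correct and follows essentially the same route as the paper: the same tube cut-off with $|\nabla\eta_\epsilon|\le C/\epsilon$ and $|S_\epsilon|\lesssim\epsilon^d$, the same H\"older estimate on the term $II$ using $u\in L^q_{loc}$ for all $q$ (which follows from $|u|\le n|\log\lambda|+C$ and $\lambda^{-1}\in L^p$), the same eigenvalue bound $\lambda_{\mathcal A}^{-1}\le C\lambda^{-(n-1)}$ making $p>n(n-1)$ exactly Trudinger's threshold $t>n$, and the same transposition of Theorem \ref{thm7} and Theorem \ref{generalize} plus bootstrapping. No substantive differences worth noting.
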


\begin{proof}
The proof is essentially the same as what we did for the smooth divisor $\{z_1 = 0\}$ as in the last two sections. First we choose appropriate cut off functions $\eta_{\epsilon}$ for $S$. Denote $S_{\epsilon}$ as the $\epsilon$-neighborhood of $S$ and denote the characteristic function of $S_{\epsilon}$ as $\chi_{\epsilon}$. $\eta_{\epsilon}$ is a  mollification of $\chi_\epsilon$ with $|\nabla \eta_\epsilon| \leq \frac{C}{\epsilon}$.

Applying the same argument as in the proof of Proposition $\ref{thm1}$, we can get for $\sigma_{\epsilon} = \psi (1 - \eta_{\epsilon})$ and $u = \log \det (g_\varphi)$ that
\begin{align*}
\int_{0.5\mathbb{D}^n - S_{2\epsilon}} |\nabla u|_{\varphi}^2 \det(g_{\varphi}) dx &\leq \int_{\mathbb{D}^n}|\nabla (\sigma_{\epsilon} u ) |_{\varphi}^2 \det(g_{\varphi}) dx
\leq C + \int_{S_{2\epsilon} \cap \mathbb{D}^n} u^2   |\nabla \sigma_{\epsilon}|^2 \Lambda_{\mathcal{A}} dx\\
&\leq C + \frac{C}{\epsilon^2}\int_{S_{2\epsilon} \cap \mathbb{D}^n} u^2 dx \\
&\leq C + C (\int_{S_{2\epsilon} \cap \mathbb{D}^n} u^{\frac{2d}{d-2}} dx)^{\frac{d-2}{d}}.
\end{align*}
$\|u\|_{L^q(\mathbb{D}^n)} < \infty$ for any $q>1$, since
\begin{align}
\int_{\mathbb{D}^n} |u|^q dx \leq \int_{\mathbb{D}^n} (n|\log \lambda| +C\big)^q dx \leq C\int_{\mathbb{D}^n} \lambda^{-1} dx  < \infty.
\end{align}

Therefore, letting $\epsilon \rightarrow 0$, we get $\int_{0.5\mathbb{D}^n} |\nabla u |_{\varphi}^2 \det(g_{\varphi}) dx < C$. Following the same argument as we did in Proposition $\ref{thm1}$ we can conclude that $u$ is a $H(\mathcal{A}, 0.5\mathbb{D}^n)$ solution for $(\ref{eqn15})$. 

Next, we want to apply Theorem 5.1 and Corollary 5.4 in \cite{Trudinger2} to conclude that $u \in L_{loc}^{\infty}(\mathbb{D}^n)$. One crucial assumption is that $\lambda_{\mathcal{A}}^{-1} \in L^t_{loc}(\mathbb{D}^n)$ for $t> n$. This is because $\lambda_{\mathcal{A}}^{-1} \leq C\lambda^{-(n-1)}$ which implies that $\lambda_{\mathcal{A}}^{-1} \in L_{loc}^{t}(\mathbb{D}^n)$ for $t> n$. Thus, $(\ref{eqn15})$ is an uniformly elliptic equation as in the classical elliptic theory and we can get that $u \in C^{\alpha}(\mathbb{D}^n)$.

Next, It suffices to show that $\varphi \in W_{loc}^{2,p}(\mathbb{D}^n)\cap PSH(\mathbb{D}^n)$ for some $p >n$ and then we can apply Theorem $\ref{generalize}$. To prove this, we will use similar arguments as in proving Theorem $\ref{thm7}$. It's crucial to show that $\Delta v = 0$ holding in $C^{\infty}$ sense on $(\mathbb{D}^n - S)$ with $\|v\|_{L^{\infty}(\mathbb{D}^n)} < C$ implies that $\Delta v = 0$ in $W^{1,2}$ sense on $\mathbb{D}^n$. It suffices to prove that $ v \in W_{loc}^{1,2}(\mathbb{D}^n)$. Given $\sigma_{\epsilon} $ as above, as $\epsilon \rightarrow 0$ , we have 
\begin{align}
 \int_{\mathbb{D}^n}|\nabla \sigma_{\epsilon} v  |^2  dx
\leq C + \int_{S_{2\epsilon} \cap \mathbb{D}^n} v^2   |\nabla \sigma_{\epsilon}|^2  dx
\leq C + \int_{S_{2\epsilon} \cap \mathbb{D}^n} |\nabla \sigma_{\epsilon}|^2  dx  \leq C.
\end{align}
Therefore, applying Theorem $\ref{generalize}$ and bootstrapping arguments, we can conclude that $g_\varphi$ extends to  a smooth cscK metric on $\mathbb{D}^n$.
\end{proof}

Combining results in Theorem $\ref{thm4}$ and Corollary $\ref{thm3}$, we get the following Corollary. And it completes the proof ot the Main Theorem $\ref{thm1.1}$.
\begin{cor}\label{thm5}
Let $f$ be a holomorphic function on $\mathbb{D}^n$, and denote $S = \{ f=0\}$. Suppose $\varphi \in  C_{loc}^{\infty}(\mathbb{D}^n \setminus S) \cap L^{\infty}(\mathbb{D}^n) \cap PSH(\mathbb{D}^n \setminus S)$ defines a cscK metric $g_\varphi = \sqrt{-1}\sum \varphi_{i\bar{j}}\mathrm{d}z^i\wedge\mathrm{d}\bar{z}^j$ on $(\mathbb{D}^n \setminus S)$. 
If there exist a constant $C>0$ and a function $\theta(r) = o(1)$ as $r \rightarrow 0$, s.t. 
\begin{align}\label{eqn22}
\frac{1}{C} \exp\{- \theta(|f|)\big|\log |f|\big|^{\frac{1}{2}}\} I \leq g_{\varphi} \leq C I,
\end{align} 
then $g_\varphi$ extends to  a smooth cscK metric on $\mathbb{D}^n$.
\end{cor}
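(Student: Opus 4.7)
The plan is to reduce Corollary $\ref{thm5}$ to the two cases already established: the smooth-divisor case in Corollary $\ref{thm3}$ and the high-real-codimension case in Theorem $\ref{thm4}$. Writing $S = S_{reg} \sqcup S_{sing}$ where $S_{reg}$ denotes the regular points of the analytic hypersurface $S = \{f=0\}$ and $S_{sing}$ its singular locus, standard analytic geometry gives that $S_{sing}$ is a proper analytic subvariety of $S$, so has complex codimension at least two in $\mathbb{D}^n$, hence real Minkowski codimension at least four.

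First I would extend $g_\varphi$ smoothly across $S_{reg}$ chart by chart. Around each $p \in S_{reg}$, choose holomorphic coordinates $(w_1,\ldots,w_n)$ on a small polydisc $U \ni p$ with $S \cap U = \{w_1 = 0\}$. Since $f$ vanishes to some order $k \geq 1$ along this smooth divisor, we may write $f(z(w)) = w_1^k h(w)$ with $h$ holomorphic and nowhere zero on $\overline U$, so $|\log|f|| = k|\log|w_1|| + O(1)$ there. Combined with the quasi-isometry of the coordinate change on $\overline U$, the bound $(\ref{eqn22})$ becomes
\begin{equation*}
\frac{1}{C'}\exp\{-\tilde\theta(|w_1|)|\log|w_1||^{1/2}\} I \leq g_\varphi \leq C' I
\end{equation*}
on $U \setminus \{w_1 = 0\}$ for some $\tilde\theta(r) = o(1)$, and the remaining hypotheses of Corollary $\ref{thm3}$ ($\varphi \in L^\infty$, plurisubharmonic, smooth off the divisor) are preserved by restriction and biholomorphic change of coordinates. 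Applying Corollary $\ref{thm3}$ in each such $U$ and gluing via continuity yields a smooth cscK metric on all of $\mathbb{D}^n \setminus S_{sing}$.

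Next I would invoke Theorem $\ref{thm4}$ with singular set $S_{sing}$. The real codimension condition is satisfied, so it suffices to verify that the lower-eigenvalue function $\lambda(z) := C^{-1} \exp\{-\theta(|f|)|\log|f||^{1/2}\}$ provided by $(\ref{eqn22})$ satisfies $\lambda^{-1} \in L^p(\mathbb{D}^n)$ for some $p > n(n-1)$. Since $\theta = o(1)$, for any $\beta > 0$ we have $\theta(|f|)|\log|f||^{1/2} \leq \beta|\log|f||$ once $|f|$ is small enough, whence $\lambda^{-1}(z) \leq C|f(z)|^{-\beta}$ near $S$ while $\lambda^{-1}$ is bounded away from $S$. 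By the positivity of the complex singularity exponent of a nontrivial holomorphic function (Skoda integrability), $|f|^{-\gamma} \in L^1_{loc}(\mathbb{D}^n)$ for some $\gamma > 0$; choosing $\beta < \gamma/p$ gives $\lambda^{-1} \in L^p(\mathbb{D}^n)$ for any prescribed $p$. Theorem $\ref{thm4}$ then extends $g_\varphi$ smoothly across $S_{sing}$, finishing the proof.

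The main technical obstacle will be the coordinate step: one must confirm that the somewhat sensitive $\exp\{-\theta|\log|f||^{1/2}\}$ lower bound transfers to the analogous form in local $w$-coordinates along $S_{reg}$ with the new $\tilde\theta$ still $o(1)$, and that the quasi-isometry constants can be absorbed uniformly on a covering of $S_{reg}$. Everything after this reduction has already been carried out in the preceding sections.
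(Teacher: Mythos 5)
Your proposal is correct and follows essentially the same route as the paper: the regular part of $S$ is handled by straightening the divisor in local coordinates and invoking Corollary $\ref{thm3}$, and the singular locus (of complex codimension $\geq 2$) is handled by Theorem $\ref{thm4}$ after checking that $\lambda^{-1}$ lies in $L^{p}$ for large $p$. The only difference is in that last verification, where the paper uses the Weierstrass Preparation Theorem and a direct slice integration instead of citing Skoda-type integrability, and your write-up is somewhat more explicit about transferring the bound $(\ref{eqn22})$ (with a new $\tilde\theta = o(1)$) across the regular points.
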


\begin{proof}
It suffices to consider the singular locus $\{\nabla f = 0\}$ of the divisor $S$. By Weierstrass Preparation Theorem, we can assume that $f = h(z,w) \prod_{i=1}^N (z - \sigma_i (w)) $ for $(z , w) \in B_{\epsilon}(0) \subset \mathbb{C} \times \mathbb{C}^{n-1}$, where $\sigma_i's$ are holomorphic functions with $\sigma_i(0) =0$ and $h(0,0) \neq 0$. 
\begin{align*}
\lambda^{-1} = C\exp\{ \theta(|f|)|\log (|f|)|^{\frac{1}{2}}\} \leq C  \prod_{i=1}^N (\frac{1}{|z - \sigma_i(w)|})^{\theta(|f|)}.
\end{align*}
Thus, for any $t < \infty$, 
\begin{align*}
\| \lambda^{-1} \|_{L^t (B_{\epsilon}(0) )}^t  &\leq (\sqrt{-1})^n\int_{|w| < \epsilon} [ \int_{|z| < \epsilon } \prod_{i=1}^N (\frac{1}{|z - \sigma_i (w)|})^{t\theta(|f|)} dz\wedge d\bar{z} ]dw\wedge d\bar{w} \\
&\leq C \text{ , if } |f| \text{ is sufficiently small.} 
\end{align*}
Thus we can apply Theorem $\ref{thm4}$ and Corollary $\ref{thm10}$ to conclude that $g_\varphi$ extends smoothly to a cscK metric on $\mathbb{D}^n$.

\end{proof}

\section*{Acknowledgement}
The author is very grateful to his advisor X.X.Chen for his generous help and constant encouragement. Also, the author would like to thank C.LeBrun for the kind comments on an earlier version of this paper. And the author wants to thank Chengjian Yao for many helpful discussions. Thanks also goes to the author's parents, who brought him up and gave him chance to pursue his dream. Finally, the author would like to thank the referee for very helpful and detailed comments on the first version of this paper.

\end{document}